\documentclass{amsart}

\usepackage{amsmath,amsfonts,amssymb}
\usepackage{amsthm,upref}
\usepackage{enumerate}
\usepackage{cancel}
\usepackage[a4paper]{geometry}
\usepackage{graphicx}
\usepackage{subcaption,tabularx,float}
\usepackage{etoolbox}
\geometry{verbose,tmargin=1.1in,bmargin=1.32in,lmargin=1.1in,rmargin=1.1in,footskip=0.8cm}
\usepackage[final]{pdfpages}
\usepackage{import}
\usepackage{xifthen}
\usepackage{pdfpages}
\usepackage{transparent}
\usepackage{xparse}

\newcommand{%
	\def\svgwidth{}
	\import{./figures/}{.pdf_tex}
}[2]{%
	\def\svgwidth{#1}
	\import{./figures/}{#2.pdf_tex}
}
\NewDocumentCommand\set{mg}{%
	\ensuremath{\bigl\{ #1 \IfNoValueTF{#2}{}{\bigm| #2} \bigr\}}%
}


\theoremstyle{plain}
\newtheorem{thm}{Theorem}[section]
\newtheorem{proposition}[thm]{Proposition}
\newtheorem{corollary}[thm]{Corollary}
\newtheorem{lemma}[thm]{Lemma}

\theoremstyle{definition}
\newtheorem{definition}[thm]{Definition}

\newtheorem{remark}[thm]{Remark}

\numberwithin{equation}{section}
\numberwithin{figure}{section}

\newcommand{\me}{\mathrm{e}}
\newcommand{\R}{\mathbb{R}}
\newcommand{\C}{\mathbb{C}}
\newcommand{\Q}{\mathbb{Q}}
\newcommand{\N}{\mathbb{N}}
\newcommand{\Z}{\mathbb{Z}}

\newcommand{\K}{\mathbb{K}}
\renewcommand{\H}{\mathcal{H}}
\newcommand{\nin}{\notin}

\newcommand{\bigslant}[2]{\left.\raisebox{.2em}{$#1$}\middle/\raisebox{-.2em}{$#2$}\right.}

\DeclareMathOperator{\Res}{Res}
\DeclareMathOperator{\Ima}{Im}

\DeclareMathOperator{\diag}{diag}

\DeclareMathOperator{\End}{End}
\DeclareMathOperator{\re}{Re}
\DeclareMathOperator{\coker}{coker}

\newcommand{\NHIM}{\mathcal{N}}

\patchcmd{\subsubsection}{-.5em}{.5em}{}{}
\patchcmd{\subsection}{-.5em}{.5em}{}{}

\title[Manifolds of Normally Hyperbolic Singularities]{Normal Forms for Manifolds of Normally Hyperbolic Singularities and Asymptotic Properties of Nearby Transitions}
\date{} 

\begin{document}
	\author{Nathan Duignan}
	\address[Nathan Duignan]{School of Mathematics and Statistics, University of Sydney, Camperdown, 2006 NSW, Australia}
	
	\begin{abstract}
		This paper contains theory on two related topics relevant to manifolds of normally hyperbolic singularities. First, theorems on the formal and $ C^k $ normal forms for these objects are proved. Then, the theorems are applied to give asymptotic properties of the transition map between sections transverse to the centre-stable and centre-unstable manifolds of some normally hyperbolic manifolds. A method is given for explicitly computing these so called Dulac maps. The Dulac map is revealed to have similar asymptotic structures as in the case of a saddle singularity in the plane.
	\end{abstract}

	\maketitle

	\section{Introduction}
			
	  Due to their persistence properties and common attributes with hyperbolic
	  singularities, normally hyperbolic manifolds have been studied and applied in
	  great depth by many authors, see for instance
	  \cite{wigginsNormallyHyperbolicInvariant1994}. However, there appears to be
	  little research aimed at normally hyperbolic manifolds consisting entirely of
	  singular points. This is primarily a consequence of their structural
	  instability under $ C^1 $-perturbations. Nevertheless, a general investigation
	  of these manifolds is warranted by recent applications in celestial mechanics
	  \cite{duignanC83regularisationSimultaneous2020,duignanChazyTypeAsymptoticsHyperbolic2020}, control theory
	  \cite{caillauSingularitiesMinTime2018}, regularisation of singularities
	  \cite{duignanRegularisationPlanarVector2019}, geometric singular perturbation
	  theory \cite{dumortierSmoothNormalLinearization2010}, and bifurcation theory
	  \cite{roussarieAlmostPlanarHomoclinic1996}.

		This work is a first venture into the properties of  normally hyperbolic manifolds of singularities considered in generality. Technical results on two related topics of normal form theory are provided. The first concerns normal form theory for these manifolds. This is studied in the formal, $ C^k $ and $C^\omega$ categories. The second is a study of transitions between sections transverse to the centre-stable and centre-unstable manifolds of normally hyperbolic manifolds consisting entirely of saddle singularities. We provide an extension of the work on hyperbolic saddles in $ \R^3 $ by Bonckaert and Naudot \cite{bonckaertAsymptoticPropertiesDulac2001}, and the `almost planar case' of Roussarie and Rousseau \cite{roussarieAlmostPlanarHomoclinic1996}. Moreover, the generalisation agrees with the particular application considered by Caillau et al. \cite{caillauSingularitiesMinTime2018}. The transition maps in the general case will be shown to share many properties of the well studied Dulac maps in the plane.
					
		The paper begins with an investigation of normal forms in Section \ref{sec:NormalForms}. In essence, normal form theory aims to define the ``simplest'' possible representation of a vector field $ X $. Two vector fields are said to be $ C^k $ (resp. analytically, formally) conjugate if there exists a $ C^k $ (resp. analytic, formal) coordinate change between them. A $ C^k $ (resp. analytic, formal) normal form is a choice of representative for each of the conjugacy classes. For this reason, normal form theory plays a crucial role in understanding the local behaviour of vector fields near a singularity or invariant manifold. A reasonably exhaustive account of the modern theory is given in \cite{murdock2006normal}. 
		
		The utility of normal forms has led many authors to develop several styles of normal forms; for instance \cite{brunoLocalMethodsNonlinear1989,elphickSimpleGlobalCharacterization1987,belitskii2002c}. The most common are the semi-simple and inner-product styles. The semi-simple style is advantageous when the Jacobian at the singularity is semi-simple, whilst the inner-product is useful when there is some nilpotent component or when the Jacobian vanishes. 
		
		There are no theoretical barriers to using the inner-product style, particularly the work of Stolovitch and Lombardi \cite{lombardi2010normal}, to study normal forms for singularities in a normally hyperbolic manifold. However, in Section \ref{sec:FormalNormalForms}, a new style of normal form will be derived which takes advantage of the centre subspace. The normal form is considered through an algebraic lens, akin to \cite{murdock2006normal}. The new approach provides results which are analogous to normal forms for hyperbolic singularities, namely, resonance conditions which describe the irremovable monomials in Lemma \ref{lem:resCondition}, and Theorem \ref{thm:FormalNormalForm} which categorises the formal normal form near normally hyperbolic invariant manifolds.
		
		Normal forms are then studied in the $ C^k $ category. Using a crucial theorem of Belitskii and Samavol \cite{ilyashenkoNonlocalBifurcations1998}, a proof is given of Corollary \ref{cor:CkNormalFOrm} on the existence of a $ C^k $ transformation bringing a vector field normally hyperbolic to a manifold of singularities into truncated normal form. In the smooth case, the result is analogous to the Sternberg-Chen Theorem for hyperbolic singularities \cite{sternbergStructureLocalHomeomorphisms1958,chenEquivalenceDecompositionVector1963}. The new style of normal form derived in Section \ref{sec:FormalNormalForms} is crucial to the proof. The result extends previous work by Takens \cite{takensPartiallyHyperbolicFixed1971} which covers the non-resonant case in a finite class of differentiability.
		
		With the normal form theory detailed, we then study Dulac maps near normally hyperbolic saddles in Section \ref{sec:TransMap}. The investigation is motivated by the many applications in \cite{duignanC83regularisationSimultaneous2020,roussarieAlmostPlanarHomoclinic1996,caillauSingularitiesMinTime2018}. Specifically, these works demand asymptotic properties of the transition map between sections transverse to the centre-stable and centre-unstable manifolds of the normally hyperbolic manifold. All applications require only a study of the case when either the stable or unstable manifold of each point on the normally hyperbolic manifold is of dimension 1. Thus we restrict our attention to this case.
		
		The Dulac map for families of hyperbolic saddles in the plane has been studied extensively. For an overview see \cite{roussarieBifurcationPlanarVector1998}. 	
		Dulac maps near a family of hyperbolic saddles in $ \R^3 $ have been treated in \cite{bonckaertAsymptoticPropertiesDulac2001,roussarieAlmostPlanarHomoclinic1996} and for some special saddle points in \cite{dumortierBifurcationsCuspidalLoops1997a}. In \cite{caillauSingularitiesMinTime2018} the Dulac map near a specific manifold of normally hyperbolic saddle singularities was studied. The asymptotic structure of the Dulac maps in the general case is heretofore not investigated. 
		
		In Section \ref{sec:TransMap} we prove Theorem \ref{thm:asymStructureofDNinN} and \ref{thm:asymptoticStructureOfDN} on the asymptotic structure of the transition map. It is shown that the transition map shares properties with the familiar planar case. In particular, the Dulac map has a Mourtada type structure \cite{mourtada1990cyclicite} and is an  asymptotic series in terms of the form,
		\begin{equation*}
		\omega(\alpha,x) = \begin{cases}
			\frac{x^{-\alpha} - 1}{\alpha}, & \alpha \neq 0 \\
			-\ln x	& \alpha =0
		\end{cases},
		\end{equation*}
		with $ x $ some small coordinate on the section and $ \alpha $ a parameter dependent on the eigenvalues of the Jacobian on the normally hyperbolic manifold.
		
	\section{Normal Forms}\label{sec:NormalForms}
		We first give some notations. Let $ \K $ be the field of real $ \R $ or complex $ \C $ numbers. Suppose $ x = (x_1,\dots,x_k) \in \K^k $ and denote by $ \partial_x := (\partial_{x_1},\dots,\partial_{x_k}) $. Then, given a function $ f:\K^k\to\K^k $, a vector field $ X $ on $ \K^k $ is defined by 
		\[ X = f\partial_x := f_1\partial_{x_1} + \dots + f_k \partial_{x_k}.  \]
		Furthermore, if $ \alpha = (\alpha_1,\dots,\alpha_k) \in \N^k $ the multinomial notation $ x^\alpha $ will be used to represent the monomial $ x_1^{\alpha_1}\dots x_k^{\alpha_k} $ of degree $ |\alpha| := \alpha_1 +\dots + \alpha_k  $.
		
		\subsection{Formal Normal Forms}\label{sec:FormalNormalForms}
			In this section the necessary theory to state and prove Theorem \ref{thm:FormalNormalForm} on formal normal forms for manifolds of normally hyperbolic singularities is built. Take $ X $ to be a germ of a smooth $ C^\infty $ or analytic $ C^\omega $ vector field on $ \K^n $ that is normally hyperbolic along an invariant manifold $ \NHIM $ of dimension $ k $ consisting entirely of singular points.
			
			A \textit{pre-normal form} can be constructed for $ \NHIM $ from well known results in the literature. In a neighbourhood of any point $ u_0 \in \NHIM $ there exists a $ C^\infty $ transformation straightening $ \NHIM $ and aligning the stable-centre $ W^{sc}(\NHIM) $ and unstable-centre $ W^{uc}(\NHIM) $ manifolds with coordinate axis \cite{wigginsNormallyHyperbolicInvariant1994}. That is, coordinates $  (x,u)\in\K^{n-k}\times\K^k $ local to $ u_0 = 0 $ can be taken such that $ X $ is of the form,
			\begin{equation}\label{eqn:preNormalForm}
				X = \left(A(u) x + f(x,u)\right) \partial_x + g(x,u) \partial_u,\quad f(0,u) = g(0,u) = 0.
			\end{equation}
			Note that in this pre-normal form $ \NHIM = \{x = 0\} $ and hence $ u $ are the centre variables. Using the theory in \cite{wigginsNormallyHyperbolicInvariant1994} further geometric properties on $ f,g $ and $ A $ can be assumed, however, for the purposes of this paper they do not play a central role. In what follows, assume that $ X $ is in this pre-normal form.
			
			In standard normal form theory one would now proceed by introducing the formal Taylor series of $ X $ at $ 0 $ in $ (x,u) $ and analyse which terms can be removed by a formal, near identity coordinate transformation $ \hat{\phi} $. Much theory has been developed in this avenue. Although these methods can certainly be implemented here, particularly the work of \cite{belitskii2002c,lombardi2010normal}, the degeneracy of the flow on $ \NHIM $ enables a slight modification of the methods and leads to a normal form with more removable terms than the standard theory.
			
			The key modification is to take a series expansion only in the normal variables $ x $ instead of all the variables $ (x,u) $. This produces a series expansion about $ x=0 $ of the form,
			\begin{equation}\label{eqn:seriesexpansion}
				X \sim X_0(u;x) + X_1(u;x) + \dots, \qquad X_0(u;x) = A(u) x \partial_x + 0\cdot \partial_u,
			\end{equation}
			where each $ X_d(u;x) $ is of dimension $ n $ and each component is a degree $ d+1 $ homogeneous polynomial in $ x = (x_1,\dots,x_{n-k}) $ with coefficients that are functions in $ u $. These coefficient functions can be considered either formal, smooth, or analytic in a neighbourhood of $ u=0 $ if $ X $ is respectively formal, smooth, or analytic.
			
			With some notation identified, the algebraic structure of the series expansion \eqref{eqn:seriesexpansion} can be formulated.
			\begin{definition}
				Define the following algebraic objects:
				\begin{enumerate}[i.]
					\item $ \hat{C}^\infty(u)$ the ring of formal power series of $u\in\K^k$. $ C^\infty(u), C^\omega(u) $ the ring of germs of respectively smooth, analytic functions in a neighbourhood of $ 0\in \K^k $. Denote all three by $ C $.
					\item $ C\mathcal{P}_d $ the free $ C $-module generated by the set of degree $ d+1 $ monomials in $ x $.
					\item $ C\H_d $ the free $ C $-module given by $ n $ copies of $ C\mathcal{P}_d $. Consider each element of $ C\H_d $ as an $ n $-dimensional vector space with components homogeneous polynomials of degree $ d+1 $ in $ x $ and whose coefficients are $ C $ functions in $ u $.
					\item $ C\H $ the Lie algebra of $ n $ dimensional formal vector fields in $ x $ with coefficients in $ C $. We take the usual Lie bracket $ [\cdot,\cdot] $ for vector fields.
					\item $ C\mathcal{F} $ the associated Lie group of $ C\H $.
				\end{enumerate}
			\end{definition}
		
			With these definitions, \eqref{eqn:seriesexpansion} can now be seen as identifying $ X $ with a formal germ of a vector field $ \hat{X}\in C\H $ and decomposing $ \hat{X} $ into $ X_d(u;x) \in C\H_d $. In what follows, germs of vector fields $ \hat{X}\in C\H $ are considered in order to produce a result on formal normal forms. This provides a succinct Lie algebraic approach to the theory. In Section \ref{sec:CkNormalForms}, properties about the actual germ $ X $ are recovered. 
			
			As detailed in \cite{murdock2006normal}, formal, near identity transformations $ \hat{\phi} \in C\mathcal{F} $ can be constructed via a generating vector field $ U \in C\H $ by taking $ \hat{\phi} $ the time $ 1 $ flow of $ U $. Moreover, one can pull back $ \hat{X} \in C\H $ to produce the transformed vector field $ \tilde{X} $ through the relation,
			\begin{equation}\label{eqn:conjugation}
				\tilde{X} = \exp(L_U )\hat{X}, \qquad L_U := [U,\cdot].
			\end{equation}
			
			Note that $ \hat{\phi} $ is in general a divergent series in $ x $ and thus only a formal transformation. However, one can write the expansion so that the coefficients of the $ x $ terms are functions in $ C(u) $. Using $ \exp(L_U) $ is particularly useful to preserve a Hamiltonian structure, see for instance \cite{Siegel2012}, but it is being used here in the general sense.
			
			In line with the usual normal form theory,  a \textit{cohomological equation} on each $ C\H_d $ will now be constructed from \eqref{eqn:conjugation}. A consequent examination of the cohomological equations will reveal which monomial vector terms in $ \hat{X} $ can be removed by a formal transformation $ \hat{\phi} $. 
			
			Let $ U_d \in C\H_d $ and transform $ \hat{X} $ by the generated transformation $ \hat{\phi}_d $ to obtain,
			\begin{align*}
				\tilde{X} 	&= \exp(L_{U_d}) \hat{X} \\
							&= (Id + L_{U_d} + \dots) (X_0 + X_1 + \dots + X_d + \dots) \\
							&= (X_0 + X_1 + \dots + X_d + [U_d,X_0] + \dots ).
			\end{align*}	
			The first terms influenced by the transformation $ \hat{\phi}_d $ is at order $ d $ and produces the equation
			\begin{equation}
				[X_0,U_d] = X_d - \tilde{X}_d.
			\end{equation}
			
			However, if $ U_d \in C\H_d $ it is not necessarily true that so too is $ [X_0,U_d] $. To see this, let a vector field $ X $ act on a vector field $ U $ by treating $ X $ as a derivation on each coordinate function and let $ U = U^x \partial_x + U^u \partial_u $. Then,
			\begin{align*}
				[X_0,U_d] 	&=  X_0(U_d) - U_d(X_0) \\
							&= \left(A(u)x \partial_x \right)(U_d^x \partial_x + U_d^u \partial_u) - (U_d^x \partial_x + U_d^u \partial_u)\left(A(u)x \partial_x \right) \\
							&= \left( A(u)x \partial_x (U_d^x) - U_d^x\partial_x(A(u)x) \right)\partial_x + \left( A(u)x\partial_x U_d^u \right)\partial_u - U_d^u\partial_u(A(u))x) \partial_x.
			\end{align*}
			The terms $$ \tilde{L}_d (U_d^x \partial_x) := [X_0, U_d^x\partial_x] = \left( A(u)x \partial_x (U_d^x) - U_d^x\partial_x(A(u)x) \right)\partial_x $$ and $$ X_0(U_d^u\partial_u) = \left( A(u)x\partial_x U_d^u \right)\partial_u $$ are both in $ C\H_d $. The final term $$ U_d^u\partial_u(A(u)x) \partial_x  $$ is in  $ C\H_{d+1} $. If this final term is pushed into the higher order terms of the expansion, then the effect of $ U_d $ on $ \hat{X} $ has first influence at degree $ d $ and is quantified by the \textit{modified cohomological equation}
			\begin{equation}
				\hat{L}_d (U_d) = X_d - \tilde{X}_d,
			\end{equation}
			with 
			\begin{equation*}
				\hat{L}_d := \tilde{L}_d \oplus X_0, \qquad \tilde{L}_d \in \End(C\H_d^x), \qquad	X_0 \in \End(C\H_d^u)
			\end{equation*}
			and $ C\H_d^x,\ C\H_d^u  $ are the submodules with vanishing $ u $ and $ x $ components respectively. 
			
			\begin{remark}
				It is worth pointing out the difference between the \emph{modified} cohomological equation and the usual cohomological equation in the normal form theory using the semi-simple or inner-product styles. The usual cohomological equation is of the form,
				\[ L_d (U_d) = X_d - \tilde{X}_d, \]
				with $ L_d := [X_0,\cdot] $. In the usual styles one has each $ X_d \in \H_d $, the vector space of degree $ d+1 $ homogeneous vector fields. With this grading $ L_d:\H_d\to\H_d $. The fact that $ L_d $ is an endomorphism on $ \H_d $ is crucial to constructing an iterative scheme on the degree $ d $, which in turn construct the normal form. However, in the new approach of this paper, we have decomposed the vector field $ X $ through the grading $ X_d\in C\H_d $, the $ C $-module of germs vector fields homogeneous in $ x $ only. In the above calculation, it is shown that $ L_d(U_d) $ produces a term $ U_d^u \partial_u(A(u)x)\partial_x \in C\H_{d+1} $. Thus, $ L_d $ acting on $ C\H_d $ is not an endomorphism. Ignoring the higher order term $ U_d^u \partial_u(A(u)x)\partial_x $ produces the endomorphism $ \hat{L}_d $ as desired.
			\end{remark}
			
			\begin{remark}\label{rmk:operatorsAreMatrices}
				A choice of ordering of the degree $ d+1 $ monomials vectors $ x^\alpha  := x^{\alpha_1}\dots x^{\alpha_{n-k}}, |\alpha| := \alpha_1 + \dots + \alpha_{n-k} = d+1 $ creates a basis for $ C\mathcal{P}_d $. Then, by ordering each vector component $ \partial_{x_i},\partial_{u_i} $ together with the ordering of $ C\mathcal{P}_d $, a basis for $ C\H_d $ can be obtained. Let the dimension of $ C\H_d $ be $ D(d) $. As $ C\H_d $ is a free module over $ C $, we have $ C\H_d \cong (C)^{D(d)} $. Thus, with a choice of basis, one can consider $ \hat{L}_d $ as a $ D(d) $ square matrix with entries in $ C $, that is, $ \End(C\H_d) \cong M_{D(d)}(C) $.	
			\end{remark}
			
			With the modified cohomological equation derived, terms in $ X_d $ removable by some formal transformation $ \hat{\phi}\in C\mathcal{F} $ can now be determined. In fact, it should be evident that all terms of $ X_d $ that are in $ \Ima(\hat{L}_d) $ can be removed by a choice of $ U_d $, and conversely, any component of $ X_d $ in $ C\H_d \setminus \Ima(\hat{L}_d) $ are irremovable. By taking $ \tilde{X}_d $ equal to the sum of these irremovable terms, it can be assured that $ X_d - \tilde{X}_d \in \Ima(\hat{L}_d) $ and the modified cohomological equation at order $ d $ can be solved. Formally, one takes the quotient module
			\[ \coker(\hat{L}_d) := \bigslant{C\H_d }{\Ima(\hat{L}_d)} \]
			and a choice of representatives $ \tilde{X}_d $ of elements $ [\tilde{X}_d] \in \coker(\hat{L}_d) $. In the terminology introduced by Murdock \cite{murdock2006normal}, this choice of representative is considered a \textit{normal form style}.
			
			In summary, it has been shown that a formal normal form for $ \hat{X} $ can be constructed through an iterative procedure. Assuming $ \hat{X} $ has been normalized to order $ d-1 $, generate a formal, near identity transformation $ \phi_d $ from a vector field $ U_d \in C\H_d $. The pull-back of $ \hat{X} $ by $ \phi_d $ leaves terms of order $ d-1 $ unchanged and produces at order $ d $ the modified cohomological equation. Then, one removes all terms from $ X_d $ that are contained in $ \Ima(\hat{L}_d) $ and the normalized terms become a choice of representative from $ \coker(\hat{L}_d) $. The procedure is repeated for $ d+1 $. The following central theorem has thus been proved.
			
			\begin{thm}\label{thm:generalFormalNormalForm}
				Let $ X $ be a germ of a $ C^\infty $ vector field that is normally hyperbolic on a manifold of singularities $ \NHIM $ and let $ \hat{X} $ be the corresponding formal series of $ X $ at 0.
				Then there exists a sequence of transformations $ \phi_d $ generated by $ U_d\in C\H_d $ which formally conjugates $ \hat{X} $ to the normal form,
				\begin{equation}
					\tilde{X} =  X_0 + \sum\limits_{d\geq 1} \tilde{X}_d,
				\end{equation}
				with $ \tilde{X}_d $ a representative of $ [\tilde{X}_d] \in \coker(\hat{L}_d) $.
			\end{thm}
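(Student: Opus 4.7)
The plan is to formalize the iterative construction already sketched in the discussion leading up to the statement, by induction on the $x$-degree $d$. At $d = 0$ the pre-normal form \eqref{eqn:preNormalForm} already forces $X_0 = A(u)x\partial_x$, so nothing is required. Suppose, for some $d \geq 1$, that previous transformations generated by $U_1,\dots,U_{d-1}$ have brought $\hat{X}$ to the form $X_0 + \tilde{X}_1 + \cdots + \tilde{X}_{d-1} + Y_d + Y_{d+1} + \cdots$, where each $\tilde{X}_j$ is the chosen representative of $\coker(\hat{L}_j)$ and each $Y_j \in C\H_j$. I would pick a representative $\tilde{X}_d \in C\H_d$ of the class $[Y_d] \in \coker(\hat{L}_d)$; by definition $Y_d - \tilde{X}_d \in \Ima(\hat{L}_d)$, so the modified cohomological equation $\hat{L}_d(U_d) = Y_d - \tilde{X}_d$ admits a solution $U_d \in C\H_d$.

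Next I would verify that the transformation generated by $U_d$ performs the desired normalization when the series $\exp(L_{U_d})$ from \eqref{eqn:conjugation} is applied to $\hat{X}$. The key degree count is that for any $V \in C\H_j$, each Leibniz-rule contribution to $[U_d, V]$ has $x$-degree at least $(d+1) + (j+1) - 1 = d+j+1$, so $[U_d, V] \in C\H_{\geq d+j}$. For $j \geq 1$ this already guarantees that the previously normalized terms $\tilde{X}_1,\dots,\tilde{X}_{d-1}$ remain untouched, and iterated brackets $L_{U_d}^k \hat{X}$ with $k \geq 2$ only contribute to degrees $\geq 2d > d$. The only degree-$d$ contribution therefore comes from $[U_d, X_0]$, whose degree-$d$ part equals $-\hat{L}_d(U_d)$, differing from the full bracket only by the stray term $U_d^u\partial_u(A(u)x)\partial_x \in C\H_{d+1}$ already identified before the statement. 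Hence the degree-$d$ coefficient of $\exp(L_{U_d})\hat{X}$ is exactly $Y_d - \hat{L}_d(U_d) = \tilde{X}_d$, while the stray term and the higher iterated brackets merely redistribute the coefficients in $C\H_{d+1}, C\H_{d+2},\dots$, to be renormalized at later stages.

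Iterating produces a sequence $\hat{\phi}_1, \hat{\phi}_2, \dots \in C\mathcal{F}$ whose infinite composition is a well defined formal transformation: for every fixed $d$ only $\hat{\phi}_1,\dots,\hat{\phi}_d$ alter the coefficients of degree $\leq d$, so each coefficient of the transformed series stabilizes after finitely many stages. The limit conjugates $\hat{X}$ to $\tilde{X} = X_0 + \sum_{d \geq 1} \tilde{X}_d$ as claimed.

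The principal technical subtlety is the bookkeeping for the stray term $U_d^u\partial_u(A(u)x)\partial_x$, which is precisely what distinguishes this scheme from the classical semi-simple or inner-product constructions. One must confirm that replacing the true operator $[X_0,\cdot]$ by its $C\H_d$-endomorphism approximation $\hat{L}_d$ does not obstruct the induction, and this is guaranteed precisely because the stray term strictly raises the $x$-degree and is absorbed into the residual tail that is normalized at the next stage. Choosing a coherent section of each $\coker(\hat{L}_d)$ --- a \emph{normal form style} in Murdock's terminology --- is a matter of convention rather than an obstruction.
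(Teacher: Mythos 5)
Your proposal is correct and follows essentially the same route as the paper, whose proof of this theorem is exactly the iterative scheme developed in the preceding exposition: solve the modified cohomological equation $\hat{L}_d(U_d) = X_d - \tilde{X}_d$ degree by degree, push the stray term $U_d^u\partial_u(A(u)x)\partial_x \in C\H_{d+1}$ into the tail, and observe that each coefficient stabilizes after finitely many stages. Your write-up merely makes the degree bookkeeping and the formal convergence of the infinite composition more explicit than the paper does.
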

			
			Whilst Theorem \ref{thm:generalFormalNormalForm} gives the algebraic structure of the normal form for a vector field $ X $, it does little to give a more concrete explanation of what terms $ \tilde{X}_d $ look like or how to find and choose the precise representative. Crucially, we want to know in what situations it can be assumed that $ \tilde{X}_d = 0 $, that is, we want to know a simple way of determining when $ X_d \in \Ima(\hat{L}_d) $. 
			
			Answers are provided in the case $ A(u) $ is diagonalisable. In this case it may be assumed that $ A(u) = \diag\left( \lambda_1(u),\dots,\lambda_{n-k}(u) \right) $ and by hyperbolicity each $ \re \lambda_i(0) \neq 0 $. Lemma \ref{lem:diagonalOperator} follows.
			
			\begin{lemma}\label{lem:diagonalOperator}
				Suppose $ X_0 = A(u)\partial_x $ and $ A(u) = \diag(\lambda_1(u),\dots,\lambda_{n-k}(u)) $. Then each modified homological operator $ \hat{L}_d \in \End(C\H_d) $ is diagonal. More precisely, if $ \alpha \in \N^{n-k},\ |\alpha| = d+1 $, $ \lambda(u) := (\lambda_1(u),\dots,\lambda_{n-k}(u)) $, and $ \langle \cdot,\cdot \rangle $ is the usual dot product on $ \K^{n-k} $, then
				\begin{equation}\label{eqn:resonanceEqn}
					\begin{aligned}
						\hat{L}_d(x^\alpha \partial_{x_i})	&= \left (\left\langle \lambda(u), \alpha \right\rangle - \lambda_i(u)\right) x^\alpha \partial_{x_i}, \\
						\hat{L}_d(x^\alpha \partial_{u_i}) 	&=  \left\langle \lambda(u), \alpha \right\rangle  x^\alpha \partial_{u_i}.
					\end{aligned}
				\end{equation} 
			\end{lemma}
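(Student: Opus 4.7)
The proof is a direct computation on monomial basis vectors, leveraging that each coordinate of $X_0$ is an eigenvector of $X_0$ in a very strong sense when $A(u)$ is diagonal. Under the hypothesis, we can write $X_0 = \sum_{j=1}^{n-k} \lambda_j(u)\, x_j \partial_{x_j}$, so acting as a derivation on any monomial gives $X_0(x^\alpha) = \sum_j \lambda_j(u)\, \alpha_j\, x^\alpha = \langle \lambda(u), \alpha\rangle x^\alpha$. The plan is to check \eqref{eqn:resonanceEqn} on the two types of basis monomials separately, using the block-diagonal decomposition $\hat{L}_d = \tilde{L}_d \oplus X_0$ on $C\H_d = C\H_d^x \oplus C\H_d^u$.

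First, for a basis vector $x^\alpha \partial_{u_i} \in C\H_d^u$, the relevant block is simply the action of $X_0$ as a derivation on the scalar coefficient. Thus $\hat{L}_d(x^\alpha \partial_{u_i}) = X_0(x^\alpha)\partial_{u_i} = \langle \lambda(u), \alpha\rangle x^\alpha \partial_{u_i}$, giving the second identity. Note that the term dropped in passing from $L_d$ to $\hat{L}_d$ — namely $U_d^u \partial_u(A(u) x)\partial_x$ — lives in $C\H_{d+1}$ and so does not contribute here.

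Next, for $x^\alpha \partial_{x_i} \in C\H_d^x$, we use $\tilde{L}_d(x^\alpha\partial_{x_i}) = [X_0, x^\alpha \partial_{x_i}]$. Expanding the bracket gives
\[
 [X_0, x^\alpha \partial_{x_i}] = X_0(x^\alpha)\,\partial_{x_i} - x^\alpha\, \partial_{x_i}(X_0).
\]
The first term is $\langle\lambda(u),\alpha\rangle\, x^\alpha\partial_{x_i}$. For the second, since $X_0 = \sum_j \lambda_j(u)\, x_j\partial_{x_j}$ and $\lambda_j$ does not depend on $x$, we obtain $\partial_{x_i}(X_0) = \lambda_i(u)\partial_{x_i}$, hence the second term equals $\lambda_i(u)\, x^\alpha \partial_{x_i}$. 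Subtracting yields the first identity of \eqref{eqn:resonanceEqn}.

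Together these two calculations show that every monomial basis vector of $C\H_d$ is an eigenvector of $\hat{L}_d$, so in the ordering of Remark \ref{rmk:operatorsAreMatrices} the matrix of $\hat{L}_d$ is diagonal, with eigenvalues as claimed. There is no real obstacle: the only subtlety is bookkeeping the distinction between $L_d$ and $\hat{L}_d$, and verifying that the dropped off-block term indeed lies in higher degree so that it plays no role in the eigenvalue computation on $C\H_d$.
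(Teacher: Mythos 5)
Your computation is correct and is precisely the ``simple calculation using the definition of $\hat{L}_d$'' that the paper invokes without writing out: you verify the two blocks of $\hat{L}_d = \tilde{L}_d \oplus X_0$ on monomial basis vectors and correctly observe that the discarded term $U_d^u\,\partial_u(A(u)x)\,\partial_x$ has degree $d+2$ in $x$ and hence lies in $C\H_{d+1}$. Nothing is missing.
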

			\begin{proof}
				This is a calculation using the definition of $ \hat{L}_d $.
			\end{proof}
			
			Let $ v $ denote $ x_i $ or $ u_i $. Then $ C\H_d $ admits submodules $ C\H_{\alpha,v} $, each defined as the free module over $ x^\alpha \partial_v $ and all of which are isomorphic to $ C $. Hence, Lemma \ref{lem:diagonalOperator} reduces the problem of describing $ \Ima(\hat{L}_d) $ into a study of the endomorphisms $ L_{\alpha,v} \in \End(C\H_{\alpha,v}) \cong \End(C)  $ and their images. These endomorphisms act by mere multiplication of $ f_{\alpha,v}(u) $ on $ C $, where $ f_{\alpha,v}(u) $ is given by the coefficient of $ x^\alpha \partial_v $ in \eqref{eqn:resonanceEqn}. Finding a representative of $ \coker(\hat{L}_d) $ is reduced to finding representatives of \[ \coker(L_{\alpha,v}) = \bigslant{C\H_{\alpha,v}}{\Ima(L_{\alpha,v})}. \]
			
			The image $ \Ima(L_{\alpha,v}) $ is equivalent to the ideal generated by $ f_{\alpha,v} $, namely $ \langle f_{\alpha,v} \rangle $. It follows, if $ f_{\alpha,v} $ has a multiplicative inverse, that is, $ f_{\alpha,v} $ is a unit, then $ \Ima(L_{\alpha,v}) = C\H_{\alpha,v} $. Consequently, $ \coker(L_{\alpha,v}) = 0 $ and the unique representative $ 0 $ can be chosen. The following lemma is analogous to the usual resonance conditions for normal forms of hyperbolic singular points.
			
			\begin{lemma}\label{lem:resCondition}
				Suppose $ A(u) = \diag(\lambda_1(u),\dots,\lambda_{n-k}(u)) $. Then all terms of the form,
				\begin{equation}
					\begin{aligned}
						f(u) x^{\alpha} \partial_{x_i}, &\quad \langle \alpha,\lambda(0) \rangle - \lambda_i(0) \neq 0 \\
						f(u) x^{\alpha} \partial_{x_i}, &\quad \langle \alpha,\lambda(0) \rangle \neq 0
					\end{aligned}
				\end{equation} 
				do not appear in the normal form $ \tilde{X} $.
			\end{lemma}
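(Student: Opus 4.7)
The plan is to leverage Lemma \ref{lem:diagonalOperator} together with the fact that $C$ is a local ring. Since $\hat{L}_d$ is diagonal in the monomial basis $\{x^\alpha \partial_v\}$, it splits $C\H_d$ as a direct sum of the rank-one submodules $C\H_{\alpha,v}$, and $\Ima(\hat L_d)$ decomposes accordingly as $\bigoplus \Ima(L_{\alpha,v})$. Thus it suffices, for each monomial vector $x^\alpha\partial_v$ with eigenvalue $f_{\alpha,v}(u)$ satisfying $f_{\alpha,v}(0)\neq 0$, to show that the cokernel of $L_{\alpha,v}$ vanishes, so that no representative of the form $f(u)x^\alpha\partial_v$ can appear in $\tilde X$.

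Concretely, I would first read off from \eqref{eqn:resonanceEqn} the explicit multipliers
\[
f_{\alpha,x_i}(u) = \langle \alpha,\lambda(u)\rangle - \lambda_i(u),\qquad
f_{\alpha,u_i}(u) = \langle \alpha,\lambda(u)\rangle,
\]
so that $L_{\alpha,v}$ acts on $C\H_{\alpha,v}\cong C$ as multiplication by $f_{\alpha,v}(u)$. Under the two hypotheses of the lemma, $f_{\alpha,v}(0)\neq 0$. The key observation is then that each of the three rings $\hat C^\infty(u)$, $C^\infty(u)$, $C^\omega(u)$ is a local ring whose maximal ideal consists precisely of the functions vanishing at $0$; hence any $f_{\alpha,v}\in C$ with $f_{\alpha,v}(0)\neq 0$ is a unit, with inverse given formally by the geometric series expansion, by division for smooth germs (using non-vanishing on a neighborhood), or by the usual argument for convergent series.

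Once $f_{\alpha,v}$ is a unit, the ideal $\langle f_{\alpha,v}\rangle$ equals all of $C$, so $\Ima(L_{\alpha,v}) = C\H_{\alpha,v}$ and $\coker(L_{\alpha,v}) = 0$. Summing over all monomial directions $x^\alpha\partial_v$ satisfying the stated non-resonance conditions, the corresponding summands of $\coker(\hat L_d)$ vanish, and Theorem \ref{thm:generalFormalNormalForm} permits the choice $\tilde X_d$ to have no component along such monomials at every order $d$. This gives the claim.

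The only potentially subtle point is the unit property in the smooth case, where $f_{\alpha,v}(u)$ is merely a smooth germ at $0$; but $f_{\alpha,v}(0)\neq 0$ forces it to be non-vanishing on a neighborhood of $0$, so $1/f_{\alpha,v}$ is again a smooth germ, and the argument proceeds uniformly across the three categories encoded in $C$.
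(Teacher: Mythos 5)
Your proposal is correct and follows essentially the same route as the paper: reduce via the diagonal structure of $\hat{L}_d$ to the rank-one endomorphisms $L_{\alpha,v}$ acting by multiplication by $f_{\alpha,v}(u)$, observe that the non-resonance hypotheses give $f_{\alpha,v}(0)\neq 0$ so that $f_{\alpha,v}$ is a unit in the local ring $C$, and conclude $\coker(L_{\alpha,v})=0$ so the representative may be chosen to vanish. The only difference is that you spell out the unit criterion in each of the three categories, which the paper simply asserts.
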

			\begin{proof}
				From Theorem \ref{thm:generalFormalNormalForm} a normal form transformation can be found which brings the coefficient of $ x^\alpha \partial_v $ to a representative of $ [f(u)] \in \coker(L_{\alpha,v}) $. If it can be shown that $ f_{\alpha,v} $ is a unit then the remarks of the proceeding exposition show this representative can be taken as $ 0 $. The units of $ C $ are easily described as the functions $ g(u) $ such that $ g(0) \neq 0 $. Now, $ f_{\alpha,v}(u) = \langle \alpha,\lambda(u) \rangle - \lambda_i(u)  $ when $ v = x_i $ and $ \langle \alpha,\lambda(u) \rangle $ when $ v = z_i $, thus the lemma can be concluded.
 			\end{proof}
		
			\begin{definition}
				The vector monomials in the union of the sets,
				\begin{equation}
					\begin{aligned}
						\Res_x &:= \{x^\alpha\partial_{x_i} \ |\ \langle \alpha,\lambda(0) \rangle - \lambda_i(0) = 0 \}, \\
						\Res_u &:= \{x^\alpha\partial_{u_i} \ |\ \langle \alpha,\lambda(0) \rangle = 0 \}, \\
						\Res_d &:= \{ x^\alpha \partial_v\in \Res_x\cup\Res_u\ |\ |\alpha| = d+1 \}
					\end{aligned}
				\end{equation} 
				are called \textit{resonant}. Moreover, the free $ C $-submodule over the set $ \Res_d $ is denoted by $ C\Res_d $ and called the \textit{resonant submodule of order $ d $}.
			\end{definition}
			
			The final problem to be resolved concerns these resonant terms. They can not a priori be removed and a choice of representative must be made. A concrete explanation of the problem of choosing a representative is, given a function $ F(u) \in C $, finding $ q(u), r(u) \in C $ such that \[ F(u) = r(u) + q(u) f_{\alpha,v}(u). \] 
			In the normal form procedure, $ F(u) $ is the coefficient of $ x^\alpha\partial_v $ in $ X_d $ and choosing an $ r(u) $ amounts to choosing a representative of $ [F(u)] \in \coker(L_{\alpha,v}) $. The question is now, is it possible to do this quotient? Of course, one can always take $ r(u) = F(u) $ and $ q(u) = 0 $, but this may not be the `simplest' form of $ r(u) $. For instance, if $ F(u) = f_{\alpha,v}(u) $, clearly a better choice is $ r(u) = 0,\ q(u) = 1 $. The following divisibility theorem provides what may be called the simplest form of $ r(u) $.
						
			\begin{thm}[Weierstrass/Mather Division Theorem \cite{golubitskyStableMappingsTheir1973}]\label{thm:division}
				Let $ f $ be a smooth (resp. analytic or formal) $ \K $-valued function defined on a neighbourhood of $ 0 $ in $ \K\times\K^{k-1} $ such that $ f(u_1,0) = u_1^m g(u_1) $ where $ g(0) \neq 0 $ and $ g $ is smooth (resp. analytic or formal) on some neighbourhood of $ 0 $ in $ \K $. Then given any smooth (resp. analytic or formal) real-valued function $ F $ defined on a neighbourhood of $ 0 $ in $ \K\times\K^{k-1} $, there exist smooth (resp. analytic or formal) functions $ q $ and $ r $ such that
				\begin{enumerate}[(i)]
					\item $ F = r + q f $ on a neighbourhood of $ 0 $ in $ \K\times\K^{k-1} $, and
					\item $ r(u) = \sum_{i=0}^{m-1} r_i(u_2,\dots,u_k) u_1^i $. 
				\end{enumerate}
			\end{thm}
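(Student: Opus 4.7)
The plan is to prove the three regularity cases (formal, analytic, smooth) in parallel, each by first invoking the corresponding Weierstrass preparation theorem and then performing polynomial division. Writing $u' = (u_2,\ldots,u_k)$, preparation produces a factorisation $f = h \cdot p$ where $h$ is a unit in the appropriate ring near $0$ and $p(u_1,u') = u_1^m + a_{m-1}(u')u_1^{m-1} + \cdots + a_0(u')$ is a distinguished polynomial with $a_i(0) = 0$. Since $h$ can be absorbed into the quotient, the problem reduces to dividing $F$ by $p$.

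For the formal case I would expand $F = \sum_{i\ge 0} F_i(u')\,u_1^i$ and iteratively eliminate powers $u_1^j$ with $j \ge m$ using the identity $u_1^m = p - (a_{m-1}u_1^{m-1} + \cdots + a_0)$. Because each $a_i$ vanishes at the origin, each application strictly raises the order in $u'$ of the coefficients attached to the remaining high powers of $u_1$, so the procedure converges in the $(u_1,u')$-adic topology and produces formal series $q,r$ with $r$ polynomial of degree $< m$ in $u_1$.

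For the analytic case the classical Weierstrass integral formula
\[
    q(u_1,u') = \frac{1}{2\pi i}\oint_{|w|=\rho} \frac{F(w,u')}{p(w,u')(w-u_1)}\,dw, \qquad r = F - qp,
\]
is valid on a circle $|w|=\rho$ chosen small enough that $p(\,\cdot\,,u')$ has all its zeros inside $|w|<\rho$ for $u'$ near $0$ and $|u_1|<\rho$. A residue computation at $w=u_1$ expresses $r$ as a sum of residues at the roots of $p(\,\cdot\,,u')$, exhibiting it as a polynomial in $u_1$ of degree $<m$ with analytic coefficients in $u'$.

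The hard part is the smooth case, which is Mather's theorem proper. I would follow the argument in \cite{golubitskyStableMappingsTheir1973}: use the formal division above to construct formal Taylor series $\hat q,\hat r$ at $0$ satisfying the identity, realise them via Borel's theorem as smooth functions $q_0,r_0$, and then correct the error $E := F - r_0 - q_0 f$, which is flat at the origin, by an integral operator modelled on the analytic formula applied fibrewise in a complexified strip around the real $u_1$-axis. The substantive analytic content is establishing continuity of the Weierstrass division operator on the appropriate Fréchet space of smooth functions flat at $0$; this technical estimate is the step I would defer to the cited reference rather than reproduce.
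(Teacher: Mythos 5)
This theorem is quoted in the paper verbatim from Golubitsky--Guillemin and is not proved there, so there is no internal argument to compare against; your sketch is the standard textbook proof and is essentially sound as an outline. The formal case (adic convergence of the elimination of $u_1^j$, $j\ge m$, using $a_i(0)=0$) and the analytic case (Cauchy integral for $q$, residue computation for $r$) are both correct and complete in the usual sense. One caveat deserves flagging in the smooth case: the Malgrange preparation theorem is, in the standard development (including the cited reference), \emph{derived from} the division theorem, so invoking preparation to reduce $f$ to a distinguished polynomial risks circularity precisely in the category where the theorem is hard. The reference avoids this by proving division directly against the generic polynomial $P_m(u_1,\lambda)=u_1^m+\lambda_{m-1}u_1^{m-1}+\cdots+\lambda_0$ (the polynomial division theorem, via the Nirenberg extension lemma and an almost-analytic Cauchy integral), and only then deducing both the general division theorem and preparation. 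Your description of the smooth case --- formal solution, Borel realisation, correction of the flat error by an integral operator --- is closer to Mather's original argument and is acceptable, but the step you defer (smoothness of the division of a flat function, i.e.\ continuity of the division operator on the relevant Fr\'echet space) is the entire content of the theorem in the $C^\infty$ category, so the proposal should be read as a correct reduction to the cited reference rather than a self-contained proof.
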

			\begin{remark}
				When $ f \neq 0 $ is a formal or analytic function on $ \K^k $ then, possibly after a linear change of $ u $, there is always an $ m $ and a $ u_i $ such that $ f(u_i,0) = u_i^m g(u_i) $. The value of $ m $ is given by the first non-zero $ m $-jet of $ f $. Moreover, it is shown in \cite{golubitskyStableMappingsTheir1973} that $ q,r  $ are unique. Algebraically, this means a unique representative of each element in $ \coker(\hat{L}_d) $ can be taken for $ C = \hat{C}^\infty $ or $ C^\omega $.
			\end{remark}
			\begin{remark}
				Uniqueness of the functions $ r,q $ fails when $ f $ is $ C^\infty $. The issue is the existence of $ f \neq 0 $ such that the $ \infty $-jet is $ 0 $, so called \textit{flat functions}. A counterexample is given in \cite{golubitskyStableMappingsTheir1973}. Take $ f $ polynomial, $ F = 0 $, and $ G $ flat. Then both $ r_1 = 0 = q_1 $ and $ r_2 = G, q_2 = -G/f $ satisfy $ F = r + q f $ and are smooth. Algebraically, this means a unique representative of each element in $ \coker(\hat{L}_d) $ when $ \hat{L}\in \End(C^\infty\H_d) $ can not be be given by Theorem \ref{thm:division}. However, a choice of representative can be made by decomposing $ F = \hat{F} + \bar{F}, f= \hat{f} + \bar{f} $ where $ \hat{\cdot},\ \bar{\cdot} $ represent the formal and flat part respectively. $ r $ can be chosen as the unique formal function given by Theorem \ref{thm:division} and satisfying $ \hat{F} = \hat{r} + q \hat{f} $. The flat terms can then be added to get an $ r = \hat{r} + \bar{r}, \bar{r} = \bar{F} - q \bar{f} $. For the counterexample, this forces the choice of $ r = q = 0 $.
			\end{remark}
			
			The main theorem for diagonalisable $ A(u) $ has thus been proved.
			\begin{thm}\label{thm:FormalNormalForm}
				Let $ X $ be a germ of a vector field of class $ C = \hat{C}^\infty, C^\infty,$ or $ C^\omega $ that is normally hyperbolic on a manifold of singularities $ \NHIM $, and let $ \hat{X} \in C \H $ be the corresponding formal series of $ X $.
				Then there exists a sequence of transformations $ \phi_d $ generated by $ U_d\in C\H_d $ which formally conjugates $ \hat{X} $ to the normal form,                          
				\begin{equation}
					\tilde{X} =  X_0 + \sum\limits_{d\geq 1} \tilde{X}_d,
				\end{equation}
				with $ \tilde{X}_d \in C\Res_d $ whose coefficients are of the form $ r(u) $ given in Theorem \ref{thm:division}. In particular, if $ X $ is analytic or formal then $ r(u) $ is polynomial in at least one of the $ u_i $.
			\end{thm}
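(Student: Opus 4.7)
My plan is to derive Theorem \ref{thm:FormalNormalForm} as a refinement of Theorem \ref{thm:generalFormalNormalForm} by choosing explicit representatives of $\coker(\hat{L}_d)$ under the diagonalisability hypothesis on $A(u)$. Since Theorem \ref{thm:generalFormalNormalForm} already yields the iterative sequence of transformations $\phi_d$ generated by $U_d \in C\H_d$, all that remains is to identify, for each $d$, a canonical choice of $\tilde{X}_d \in C\H_d$ lying in $C\Res_d$ with coefficients of the desired divisibility form.

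First I would use Lemma \ref{lem:diagonalOperator} to decompose $C\H_d$ as the direct sum of the rank-one submodules $C\H_{\alpha,v}$ ranging over $|\alpha|=d+1$ and $v\in\{x_1,\dots,x_{n-k},u_1,\dots,u_k\}$, and note that $\hat{L}_d$ restricts to multiplication by the scalar $f_{\alpha,v}(u)$ on each summand. Taking representatives componentwise, the cohomological equation reduces to the scalar problem of writing the coefficient $F(u)$ of $x^\alpha \partial_v$ in $X_d$ as $F = r + q\, f_{\alpha,v}$ with $r$ representing the class $[F]\in\coker(L_{\alpha,v})$. For non-resonant $(\alpha,v)$, Lemma \ref{lem:resCondition} shows $f_{\alpha,v}$ is a unit, so one takes $r=0$ and the corresponding monomial is eliminated, reducing $\tilde{X}_d$ to a representative of the quotient over the \emph{resonant} directions only, i.e. to an element of $C\Res_d$.

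The remaining and main step is choosing $r$ for resonant $(\alpha,v)$, where $f_{\alpha,v}(0)=0$. In the formal or analytic case, $f_{\alpha,v}(u)=\langle\alpha,\lambda(u)\rangle - \delta\,\lambda_i(u)$ (with $\delta \in\{0,1\}$) is either identically zero — in which case $\Ima(L_{\alpha,v})=0$, so $r=F$ is forced and is trivially polynomial — or nonzero, in which case the remark following Theorem \ref{thm:division} supplies, after a linear change of $u$, an index $m\ge 1$ and a coordinate $u_j$ with $f_{\alpha,v}(0,\dots,u_j,\dots,0)=u_j^m g(u_j)$, $g(0)\ne 0$. Applying the Weierstrass/Mather Division Theorem then produces a unique $r$ of the form $\sum_{i=0}^{m-1} r_i(u_{\neq j}) u_j^i$, delivering the polynomial-in-$u_j$ conclusion. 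The hardest case to handle cleanly is $C=C^\infty$, where flat parts obstruct uniqueness; here I would follow the second remark after Theorem \ref{thm:division} and split $F=\hat{F}+\bar{F}$, $f_{\alpha,v}=\hat{f}+\bar{f}$ into formal and flat components, apply the division theorem to the formal pieces to obtain $\hat{r}$, and set $r=\hat{r}+\bar{F}-q\bar{f}$; this yields a well-defined smooth representative, albeit without uniqueness or the polynomial refinement. Assembling these choices of $r$ across all resonant $(\alpha,v)$ and all degrees $d$ gives the claimed normal form.
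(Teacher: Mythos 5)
Your proposal follows essentially the same route as the paper: Theorem \ref{thm:FormalNormalForm} is stated there without a separate proof precisely because it is assembled, as you do, from Theorem \ref{thm:generalFormalNormalForm}, the diagonal decomposition of Lemma \ref{lem:diagonalOperator}, the unit criterion of Lemma \ref{lem:resCondition}, and the Weierstrass/Mather division (with the flat/formal splitting for $C^\infty$). The only slip is your claim that when $f_{\alpha,v}\equiv 0$ the forced choice $r=F$ is ``trivially polynomial'' --- a general element of $C$ is not of the form $\sum_{i=0}^{m-1} r_i(u_2,\dots,u_k)u_1^i$, so in that degenerate case (constant resonant eigenvalue combinations) the polynomial refinement simply does not apply, a caveat the paper also leaves implicit via its assumption $f\neq 0$.
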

		
		\subsection{$ C^k $-Normal Forms}\label{sec:CkNormalForms}
			Theorem \ref{thm:FormalNormalForm} provides a formal normal form $ \tilde{X} $ for a given germ of a vector field $ X $ near a point $ u_0 $ of a normally hyperbolic manifold of singularities $ \NHIM $. The theorem states the existence of a formal transformation $ \hat{\phi} $ bringing $ \hat{X} $ into its normal form $ \tilde{X} $. However, the statement is only formal, meaning that $ \tilde{X} \sim \hat{\phi}^* X $ where $ \sim $ is equivalence of the series expansion at $ 0 $ in one of the forms \eqref{eqn:seriesexpansion}. There are two questions worth addressing:
			\begin{enumerate}
				\item Can $ \hat{\phi} $ be taken smooth or analytic?
				\item If $ \tilde{X}^K := X_0 + \sum_{d \leq K } \tilde{X}_d $ is the normal form of $ X $ truncated at degree $ K $, does there exist an integer $ k $ and $ \phi\in C^k $ which conjugates $ X $ to $ \tilde{X}^K $?
			\end{enumerate} 
		
			The usual trick to replace a formal transformation $ \hat{\phi} $ with a smooth transformation $ \phi $ is to evoke the Borel extension lemma \cite[pg.~98, Lemma~2.5]{golubitskyStableMappingsTheir1973}. The lemma guarantees, for any formal series $ \hat{\phi} $, the existence of a smooth function $ \phi \sim \hat{\phi} $. If this lemma can be applied here, then there is a smooth transformation $ \phi $ such that $ \tilde{X} \sim \phi^* X $. 
			
			In order to apply the Borel lemma to a transformation $ \hat{\phi}\in C \mathcal{F} $, each of the coefficient functions from $ C $ must be defined on the same domain. In general, this is impossible! The problem comes from the possibility of other resonances occurring when the spectrum $ \lambda(u) \in \C^k $ of $ A(u) $ depends on $ u $. That is, a resonance of the form
			\[ \langle \alpha, \lambda(u)\rangle - \lambda_i(u) = 0 \text{ or } \langle \alpha, \lambda(u)\rangle, \]
			for $ u \neq 0 $ for some $ \alpha \in \N^k $. Such an additional resonance will shrink the domain on which $ f_{a,v}(u) $ is an identity, and hence the domain for which the coefficients of $ \hat{\phi} $ are smooth.
			
			Nevertheless, the following lemma can be proved		
			\begin{lemma}\label{lem:neighbourhoods}
				Let $ X $ be a germ of a $ C^\infty $ vector field that is normally hyperbolic on a manifold of singularities $ \NHIM $. Then there exists a sequence of neighbourhoods $ W_d $ of $ (0,0) $ with $ W_{d+1} \subset W_{d} $, and a sequence of transformation $ \phi^d $, polynomial in $ x $ and smooth in $ W_d $, such that, for any $ K \in \N $, 
				\[ \phi^{K*} X = X_0 + \sum_{d=1}^{K}\tilde{X}_d + R_K, \]
				where $ \tilde{X}_d \in C\Res_d $ and $ R_K $ is $ K $-flat in $ x $.
				
				Moreover, if $ \cap_{d\geq 1} W_d $ contains some open neighbourhood $W_\infty$ of $\NHIM $, then there exists a function $ \phi $ smooth in a neighbourhood $ W_\infty $ of $ 0 $ so that,
				$ \phi^{K*} X = X_0 + \sum_{d\geq 1}\tilde{X}_d + R_\infty, $
				where $ R_\infty $ is flat in $ x $.
			\end{lemma}
			\begin{proof}
				For each $ K < \infty $, we can always take a sufficiently small neighbourhood $ W_K $ of $ (x,u) = 0 $ so that there are no resonance conditions for $ (x,u)\in W_k $ with $ u \neq 0 $. Hence, the coefficients of $ \hat{\phi} $ for each monomial of degree less than $ K $ are smooth in $ W_k $. By truncating $ \hat{\phi} $ at order $ K $, from Theorem \ref{thm:generalFormalNormalForm} we obtain a polynomial transformation $ \phi^K $ which is smooth in the neighbourhood $ W_K $ and with the desired conjugation properties.
				
				If $ \cap_{d\geq 1} W_d  $ contains some open neighbourhood $W_\infty$ then this is a common domain for which all coefficient functions of $ \hat{\phi} $ are smooth. The Borel extension lemma concludes the result.
			\end{proof}	
		
			\begin{remark}
				There are some important cases which guarantee the application of the
        Borel lemma. For example, if the spectrum $ \lambda(u) $ is constant in
        a neighbourhood of $ 0 $, or of the form $ \kappa(u) \lambda $ for some
        smooth scalar function $ \kappa $, or if the eigenvalues are purely
        attracting (or repelling).
			\end{remark}	
			
			
			The question remains, if $ \phi $ can only be assumed smooth or polynomial in general, whether the remainder term $ R_K $ can be removed so that formal conjugacy can be replaced by smooth conjugacy. In the case of a purely hyperbolic singularity, the question is answered positively by the Sternberg-Chen Theorem \cite{sternbergStructureLocalHomeomorphisms1958,chenEquivalenceDecompositionVector1963}. 
			
			A more general problem is, given two vector fields $ X,\tilde{X} $ with identical $ K(k) $-jet at $ 0 $, when can it be guaranteed $ X,\tilde{X} $ are $ C^k $ conjugate for some function $ K:\N\to\N $. The most general theorem in this direction has been proved for maps by Samovol and for vector fields by Belitskii.
			
			\begin{thm}[Belitskii-Samovol \cite{ilyashenkoNonlocalBifurcations1998}]\label{thm:Belitskii-Samovol}
				For any $ k\in\N $ and any tuple $ \lambda \in \C^n $ there exists an integer $ K = K(k,\lambda) $ such that the following holds. Suppose two germs of vector fields at a singularity with the spectrum of	linearization equal to $ \lambda $ have a common centre manifold, and their jets of order $ K $
				coincide at all the points of this manifold. Then these germs are $ C^k $ equivalent.
			\end{thm}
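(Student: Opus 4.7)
The plan is to use the \emph{homotopy (path) method}, which is the standard approach for $C^k$-conjugacy results of this flavour. Let $X$ and $\tilde{X}$ be the two germs, set $\rho := \tilde{X} - X$, and note that by hypothesis $\rho$ is $K$-flat along the common centre manifold $W^c$. Form the interpolation $X_t := X + t\rho$ for $t\in[0,1]$; each $X_t$ has spectrum $\lambda$ at $0$ and preserves $W^c$. One seeks a family of $C^k$ diffeomorphisms $\varphi_t$ with $\varphi_0 = \mathrm{id}$ and $\varphi_t^* X_t = X$, so that $\varphi_1$ conjugates $\tilde{X}$ to $X$. Differentiating in $t$, the existence of such a family is equivalent to solving the \emph{homological equation}
\begin{equation*}
[X_t, Z_t] = \rho
\end{equation*}
in $C^k$ for a time-dependent vector field $Z_t$, after which $\varphi_t$ is recovered by integrating its non-autonomous flow.

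To solve the homological equation, I would invoke the existence of $C^K$ strong stable and strong unstable invariant foliations of $X_t$ over $W^c$, which exist for $K$ sufficiently large by normal hyperbolicity and depend smoothly on $t$ since the linear part along $W^c$ is independent of $t$. Using a partition of unity subordinate to a cover of a neighbourhood of $W^c$ by a centre-stable and a centre-unstable tube, split $\rho = \rho^+ + \rho^-$ and solve the equation piecewise by the standard integral formula along the flow $\phi^t_s$ of $X_t$,
\begin{equation*}
Z_t^+(p) \;=\; -\!\int_0^{\infty} \bigl(\phi^t_{s}\bigr)^{\!*}\rho^+\!\bigl(\phi^t_s(p)\bigr)\,ds,
\end{equation*}
with a mirror formula in backward time producing $Z_t^-$ for $\rho^-$. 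Flatness of $\rho$ in the hyperbolic directions combined with hyperbolic contraction makes each integral converge at the $C^0$ level, and blending the two pieces on the overlap gives a global $C^0$ solution $Z_t$.

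The main obstacle, and the source of the function $K(k,\lambda)$, is propagating this $C^0$ convergence to the $C^k$ level uniformly in $t\in[0,1]$. Differentiating $Z_t^\pm$ in a centre direction produces, via the chain rule along the flow, factors that grow polynomially in $s$ multiplied by exponentials whose rates are the resonant combinations $\langle \alpha, \lambda\rangle - \lambda_i$ that already appeared in Lemma~\ref{lem:diagonalOperator}. To guarantee convergence after $k$ differentiations one needs $\rho$ to vanish to an order $K$ that dominates the worst such combination arising up to order $k$; this is precisely the quantitative dependence $K = K(k,\lambda)$ claimed in the statement. A careful Fa\`a di Bruno bookkeeping of mixed $(x,u)$-derivatives, together with uniform-in-$t$ Gronwall estimates on $D\phi^t_s$, then yields $Z_t \in C^k$, and standard regularity theory for non-autonomous flows finally produces $\varphi := \varphi_1$ as the required $C^k$ equivalence.
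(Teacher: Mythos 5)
The paper does not prove this theorem; it states it and cites Ilyashenko and Yakovenko \cite{ilyashenkoNonlocalBifurcations1998}, where it is attributed to Belitskii (for vector fields) and Samovol (for maps), and then uses it as a black box to obtain Corollary~\ref{cor:CkNormalFOrm}. So there is no in-paper proof to compare your argument against, and any critique has to be of your sketch on its own terms.

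Your homotopy-method outline captures the right general shape of such results, but as written it has at least one gap that is not merely a matter of omitted estimates. The homological operator $[X_t,\cdot]$ is linear, so once $Z_t^+$ and $Z_t^-$ are defined by the forward and backward integrals there is nothing to ``blend'': $Z_t^+ + Z_t^-$ already solves $[X_t,Z_t]=\rho^+ + \rho^-=\rho$ wherever both integrals make sense. The real obstruction is that they do not both make sense on a full neighbourhood of the centre manifold: in a neighbourhood of a saddle, a generic orbit leaves in both forward and backward time, so neither improper integral converges there, and multiplying the integrand by a cutoff destroys the equation it was designed to solve. The standard proofs (Sternberg, Chen, Belitskii) avoid this by working along the strong stable and strong unstable foliations separately --- first conjugating away the jet along the leaves of one foliation, then the other --- rather than by a partition-of-unity split of $\rho$ in the ambient space. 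In addition, the two ingredients that actually carry the quantitative content of the theorem, namely the existence and $t$-regularity of $C^K$ strong foliations for the whole path $X_t$ and the Gronwall/Fa\`a di Bruno estimates tying the order of flatness $K$ to the resonance gaps $\langle\alpha,\lambda\rangle-\lambda_i$, are asserted in your sketch rather than established; these are exactly what the cited reference supplies and are the heart of the matter, so the proposal as it stands is an outline of the strategy rather than a proof.
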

		
			Hartman, in \cite{hartmanOrdinaryDifferentialEquations2002}, proved a version of this theorem with $K(k, \lambda)$ explicitly given as an affine
			function of $k$ and with coefficients in terms of $\lambda$. In the original proof by Belitskii, there is also an explicit expression of $K(k,\lambda)$ which is optimal and depends on the gaps between the real parts of the eigenvalues. The less explicit version stated here is proved in \cite{ilyashenkoNonlocalBifurcations1998} and uses the `path' or `homotopy method'. This method of proof allows one to take $ k = \infty $ provided one first has only a flat remainder as in Lemma \ref{lem:neighbourhoods}. A similar proof to that in \cite{ilyashenkoNonlocalBifurcations1998} which explicitly gives the $ k=\infty $ case was given in \cite[Thm.~10]{roussarie1975modeles} for families of hyperbolic singularities. 
		
			Theorem \ref{thm:Belitskii-Samovol} can be applied provided the $ K(k) $-jets of $ X $ and $ \tilde{X} $ agree along $ x=0 $ in a neighbourhood of $ (x,u)= 0 $. Indeed this is true for any $ \phi^{K*}X $ as the remainder $ R_K $ is $ K $-flat along $ x $. Hence, the following key corollary on the $ C^k $-normal form near points in $ \NHIM $ has been shown.
		
			\begin{corollary}\label{cor:CkNormalFOrm}
				Let $ X $ contain a manifold of normally hyperbolic singularities $ \NHIM $. Then there exists a function $ K(k) : \N \to \N $ such that $ K(k) \to \infty $ as $ k\to \infty $, and such that $ X $ is $ C^k $-conjugate to the normal form $ X^{K(k)} $ in a neighbourhood $ W_K $ of any point $ p \in \NHIM $.
				
				Moreover, one can take $ K = \infty $ if, in a neighbourhood of $ p $, the spectrum $ \lambda(u) $ of $ A(u) $ is constant, or of the form $ \kappa(u) \lambda $ for some smooth scalar function $ \kappa $, or if the eigenvalues are purely attracting (or repelling).
			\end{corollary}
		
			Finally, we give comment to the case $ X $ is analytic. If $ \phi $ can be
      taken analytic then both proposed questions are answered. A substantial
      amount of work in the literature has already addressed the potential
      analyticity of $ \phi $ for a hyperbolic singularity, for an overview see
      \cite{walcherSymmetriesConvergenceNormal2004}. In this context, provided
      the eigenvalues of the Jacobian at the singularity satisfy the Bruno
      conditions, analyticity is guaranteed. The condition also holds for
      families of vector fields. Analyticity is not of concern in this paper,
      but due to the similarity in the resonance conditions between normal forms
      for hyperbolic singularities and normal forms for normally hyperbolic sets
      of singularities, we conjecture an analogous condition holds. This
      conjecture is further evidenced by the recent result in
      \cite{duignanChazyTypeAsymptoticsHyperbolic2020} which contains a theorem
      guaranteeing analyticity of the normal form in the case that $\lambda(u) =
      \kappa(u)\tilde{\lambda},\, \tilde{\lambda}\in\C,\, \kappa(u)\in\C^k $.
		
	\section{Asymptotic Properties of the Transition Map Near Some Normally Hyperbolic Saddles}\label{sec:TransMap}
		
	  In this section we derive the asymptotic properties of transitions near a
	  manifold $ \NHIM $ of normally hyperbolic singularities and provide a method
	  to compute them. We assume that at each point $ u_0\in\NHIM $ the eigenvalues
	  are real and there is at least one pair of eigenvalues of opposite sign, that
	  is, $ \NHIM $ contains normally hyperbolic saddles. Ideally, asymptotic
	  properties would be shown for arbitrary dimensions of the centre-stable $
	  W^{sc}(\NHIM) $ and centre-unstable $ W^{uc}(\NHIM) $ manifolds. However, a
	  derivation is given only when the unstable or stable manifold at each point $
	  u_0 \in \NHIM $ is one dimensional. Moreover, for clarity, focus is given only
	  on manifolds $ \NHIM $ of co-dimension 3. All methods introduced naturally
	  extend to the higher co-dimension cases. Remarks are given throughout for the
	  case $ \NHIM $ is co-dimension 2.
		
		Let $ X $ be a germ of a smooth vector field in a neighbourhood of a co-dimension 3 manifold $ \NHIM $ of normally hyperbolic saddle singularities. Let the dimension of $ \NHIM $ be $ k $. Without loss of generality assume that $ X $ is in the pre-normal form \eqref{eqn:preNormalForm} with $ (x,y,z)\in\R^3 $ so that $ \NHIM $ is given by $ (x,y,z)=0 $ and the centre variables are given by $ u \in \R^k $. By a time rescaling, it can be assumed that for all $ u \in \NHIM $ the eigenvalues of $ DX_u $ restricted to the normal space of $ \NHIM $ are given by $ (1, -\alpha(u), -\beta(u)) $ and satisfy,
		\[ -\alpha(0) \leq -\beta(0) < 0. \] Choose coordinates $ x,y,z $ so that
    the linearisation of the normal space is given by $ x \partial_x -\alpha(u)
    \partial_y -\beta(u) z \partial_z $. Note that if $ -\alpha(u) = -\beta(u) $
    then $ DX_u(0) $ may have some nilpotent component preventing this
    diagonalisation. This case is dealt with in the proceeding theory simply by
    treating the additional $ z \partial_y $ term as a higher order term.
		
		Before discussing the transitions of interest in this paper, it is useful to first classify the form of germs $ X $ in a neighborhood of a point on $ \NHIM $. This was accomplished in the previous section through normal form theory. The following proposition is an application of this work.
		\begin{proposition}\label{prop:normalForm3DimN}
			Let $ X $ be  the germ of a smooth vector field that is normally hyperbolic on a manifold of saddle singularities $ \NHIM $ as described above. For every point $ u_0\in\NHIM $ there exists a function $ K(k):\N\to\N $ such that, in some neighbourhood $ W_K $ of $ u_0 $, $ X $ is $ C^k $ conjugate to either \eqref{eqn:3DimNFninNu} or \eqref{eqn:3DimNFNu} subject to the following conditions.
			\begin{enumerate}[i)]
				\item Suppose that, $ \alpha(0) = \frac{p_1}{q_1} \in \Q,\ \beta(0) = \frac{p_2}{q_2} \in \Q $, $ \frac{\alpha(0)}{\beta(0)} \nin \N $ with both $ p_1,q_1 $ and $ p_2,q_2 $ co-prime. Let 
				\[ U_y = x^{\frac{p_1}{q_1}} y,\qquad U_z = x^{\frac{p_2}{q_2}} z. \]
				Under these resonance conditions $ X $ is conjugate to
				\begin{equation}\label{eqn:3DimNFninNu}
					\begin{aligned}
						\dot{x}	&= x \\
						\dot{y}	&= -\alpha(u) y +  y \sum_{1 \leq n_1 + n_2 \leq K} \alpha_{n_1,n_2}(u) U_y^{q_1 n_1} U_z^{q_2 n_2} \\
						\dot{z}	&= -\beta(u) z + z \sum_{1 \leq n_1+ n_2 \leq K} \beta_{n_1,n_2}(u) U_y^{q_1 n_1} U_z^{q_2 n_2}, \\
						\dot{u}_i &= \sum_{1 \leq n_1 + n_2 \leq K} \delta^i_{n_1,n_2}(u) U_y^{q_1 n_1} U_z^{q_2 n_2}, \quad i=1,\dots,k,
					\end{aligned}
				\end{equation}
				with $ n_1,n_2 \in \N $ and all functions in $u$ smooth. If $ \alpha(0) \nin \Q $ (resp. $ \beta(0) \nin \Q $) then there is no $ U_y $ (resp. $ U_z $) dependency.
				\item If additionally $ \frac{\alpha(0)}{\beta(0)} \in \N $ then there exists $ m,p,q \in \N $ with $ p,q $ co-prime such that $ \alpha(0) = m \frac{p}{q},\ \beta(0) = \frac{p}{q} $. Let
				\[ U_y = x^{\frac{mp}{q}} y,\qquad U_z = z^{\frac{p}{q}} y. \]
				Under these resonance conditions $ X $ is conjugate to
				\begin{equation}\label{eqn:3DimNFNu}
					\begin{aligned}
						\dot{x}	&= x \\
						\dot{y}	&= - \alpha(u) y + y \sum_{\substack{-1 \leq n_1 \leq K\\ q 0 \leq n_2 - m n_1 \leq K}} \alpha_{n_1,n_2}(u) U_y^{n_1} U_z^{q n_2 - m n_1}  \\
						\dot{z}	&= - \beta(u) z + z \sum_{\substack{ 0 \leq n_1 \leq K\\ -1 \leq q n_2 - m n_1 \leq K}} \beta_{n_1,n_2}(u) U_y^{n_1} U_z^{q n_2 - m n_1} \\
						\dot{u}_i &= \sum_{\substack{0 \leq n_1 \leq K\\ 0 \leq q n_2 - m n_1 \leq K}} \delta^i_{n_1,n_2}(u) U_y^{n_1} U_z^{q n_2 - m n_1}, \quad i=1,\dots,k,
					\end{aligned}
				\end{equation}
				with $ n_1,n_2 \in \N $ and all functions in $u$ smooth. If $ \alpha(0),\beta(0) \nin \Q $ then there is no $ U_y, U_z $ dependency. 
			\end{enumerate}
		\end{proposition}
		\begin{proof}
			As stated, the proposition is a direct consequence of Theorem \ref{thm:FormalNormalForm} on the normal form near a point in $ \NHIM $. It has been assumed that $ A(z) $ is diagonalised so that $ A(u) = \diag(1,-\alpha(u),-\beta(u))$. Then by Theorem \ref{thm:FormalNormalForm} and Corollary \ref{cor:CkNormalFOrm} we are guaranteed, in a neighbourhood of $ (x,y,z,u) = 0 $, a smooth transformation $ \phi $ conjugating $ X $ to a vector field 
			\[ \tilde{X} = X_0 + \sum_{d \geq 1} \tilde{X}_d, \]
			with $ \tilde{X}_d \in C^\infty \Res_d $. From Lemma \ref{lem:resCondition} each vector field in $ \Res_d $ consists of linear combinations of resonant monomial vector fields,
			\begin{align*}
				x^{n_1}y^{n_2}z^{n_3} \partial_{x} &\text{ such that } n_1 - \alpha(0) n_2 - \beta(0) n_3 - 1 = 0 ,\\
				x^{n_1}y^{n_2}z^{n_3} \partial_{y} &\text{ such that } n_1 - \alpha(0) n_2 - \beta(0) n_3  + \alpha(0) = 0, \\
				x^{n_1}y^{n_2}z^{n_3} \partial_{z} &\text{ such that } n_1 - \alpha(0) n_2 - \beta(0) n_3  + \beta(0) = 0 ,\\
				x^{n_1}y^{n_2}z^{n_3} \partial_{u_i} &\text{ such that } n_1 - \alpha(0) n_2 - \beta(0) n_3 = 0,
			\end{align*}
			for $ n_1,n_2,n_3 \in \N $ and $ n_1 + n_2 + n_3 \geq 2 $. Having a complete description of these resonant monomials will give the normal form. We derive the resonant monomials only for the $ y $ component as the other components follow almost identically.
			
			If $ \alpha(0) = \frac{p_1}{q_1} \in \Q,\ \beta(0) = \frac{p_2}{q_2} \in \Q $, $ \frac{\alpha(0)}{\beta(0)} \nin \N $ with both $ p_1,q_1 $ and $ p_2,q_2 $ co-prime, then a solution to $ n_1 - \alpha(0) n_2 - \beta(0) n_3  + \alpha(0) = 0 $ is given by 
			\[ n_1 = k_1 p_1 + k_2 p_2,\quad n_2 = 1 + k_1 q_1,\quad n_3 = k_2 q_2, \] 
			for $ k_1, k_2 \in \N $ with $ k_1 + k_2 \geq 1 $. This produces the monomial of the form $ y (x^{p_1} y^{q_1})^{k_1} (x^{p_2}z^{q_2})^{k_2} \partial_y = y U_y^{q_1 k_1} U_z^{q_2 k_2} \partial_y $ as desired.
			If $ \alpha(0) \nin \Q $ then we must have $ k_1 = 0 $, hence, the resonant monomial has no $ U_y $ dependence. Similarly if $ \beta(0) \nin \Q $, then $ k_2 = 0 $ and there is no $ U_z $ dependence. These results conclude case 1 of the proposition.
			
			Alternatively, if $ \frac{\alpha(0)}{\beta(0)} \in \N $, then there exists $ m,p,q \in \N $ with $ p,q $ co-prime such that $ \alpha(0) = m \frac{p}{q},\ \beta(0) = \frac{p}{q} $. In such a case, a solution to $ n_1 - \alpha(0) n_2 - \beta(0) n_3  + \alpha(0) = 0 $ is given by 
			\[ n_1 = p k_1,\quad n_2 = 1 + k_2,\quad n_3 = q k_1 - m k_2, \]
			for $ k_1, k_2 \in \Z $ such that $ k_2 \geq -1, 0 \leq q k_1 - m k_2  $. This produces the monomial of the form $ y (x^p z^q)^{k_1} (y z^{-m})^{k_2} \partial_y = U_z^{q k_1-m k_2} U_y^{k_2} \partial_y $ as desired.
			If $ \alpha(0) \nin \Q $ then it must be that $ \beta(0)\nin \Q $. In this instance, $ k_1 = k_2 = -1 $ is the only possible solution. These results conclude case 2 of the proposition.
			
			The function $ K $ is decided from Corollary \ref{cor:CkNormalFOrm}.
			
			Finally, there may be resonant monomials in the $ x $ components of the vector field. Through a smooth time rescaling, all these can be moved from the $ x $ component to the other components. 
		\end{proof}
		\begin{remark}\label{rmk:flattenxyz}
			The difference between the normal forms \eqref{eqn:3DimNFninNu} and \eqref{eqn:3DimNFNu} comes from the additional resonance $ \alpha(0)/\beta(0) \in \N $. Geometrically, this is represented by the fact that $ y = 0, z = 0 $ are invariant in \eqref{eqn:3DimNFninNu} whilst the resonant terms with coefficients $ \alpha_{-1,n_2},\beta_{n_1,-1} $ in \eqref{eqn:3DimNFNu} prevent one from performing a smooth transformation to have these planes invariant. 			
		\end{remark}
		\begin{remark}\label{rmk:planarnormalform}
			The case when $ \NHIM $ is co-dimension 2 is significantly simpler. The normal form is given by restricting to $ z = 0 $ in system \eqref{eqn:3DimNFninNu}. A qualitative depiction of the co-dimension 2 case is given in Figure \ref{fig:1dN}.
		\end{remark}
		\begin{figure}[ht]
			\centering
	\def\svgwidth{0.7\textwidth}
\begingroup%
  \makeatletter%
  \providecommand\color[2][]{%
    \errmessage{(Inkscape) Color is used for the text in Inkscape, but the package 'color.sty' is not loaded}%
    \renewcommand\color[2][]{}%
  }%
  \providecommand\transparent[1]{%
    \errmessage{(Inkscape) Transparency is used (non-zero) for the text in Inkscape, but the package 'transparent.sty' is not loaded}%
    \renewcommand\transparent[1]{}%
  }%
  \providecommand\rotatebox[2]{#2}%
  \newcommand*\fsize{\dimexpr\f@size pt\relax}%
  \newcommand*\lineheight[1]{\fontsize{\fsize}{#1\fsize}\selectfont}%
  \ifx\svgwidth\undefined%
    \setlength{\unitlength}{669bp}%
    \ifx\svgscale\undefined%
      \relax%
    \else%
      \setlength{\unitlength}{\unitlength * \real{\svgscale}}%
    \fi%
  \else%
    \setlength{\unitlength}{\svgwidth}%
  \fi%
  \global\let\svgwidth\undefined%
  \global\let\svgscale\undefined%
  \makeatother%
  \begin{picture}(1,0.71300448)%
    \lineheight{1}%
    \setlength\tabcolsep{0pt}%
    \put(0,0){\includegraphics[width=\unitlength,page=1]{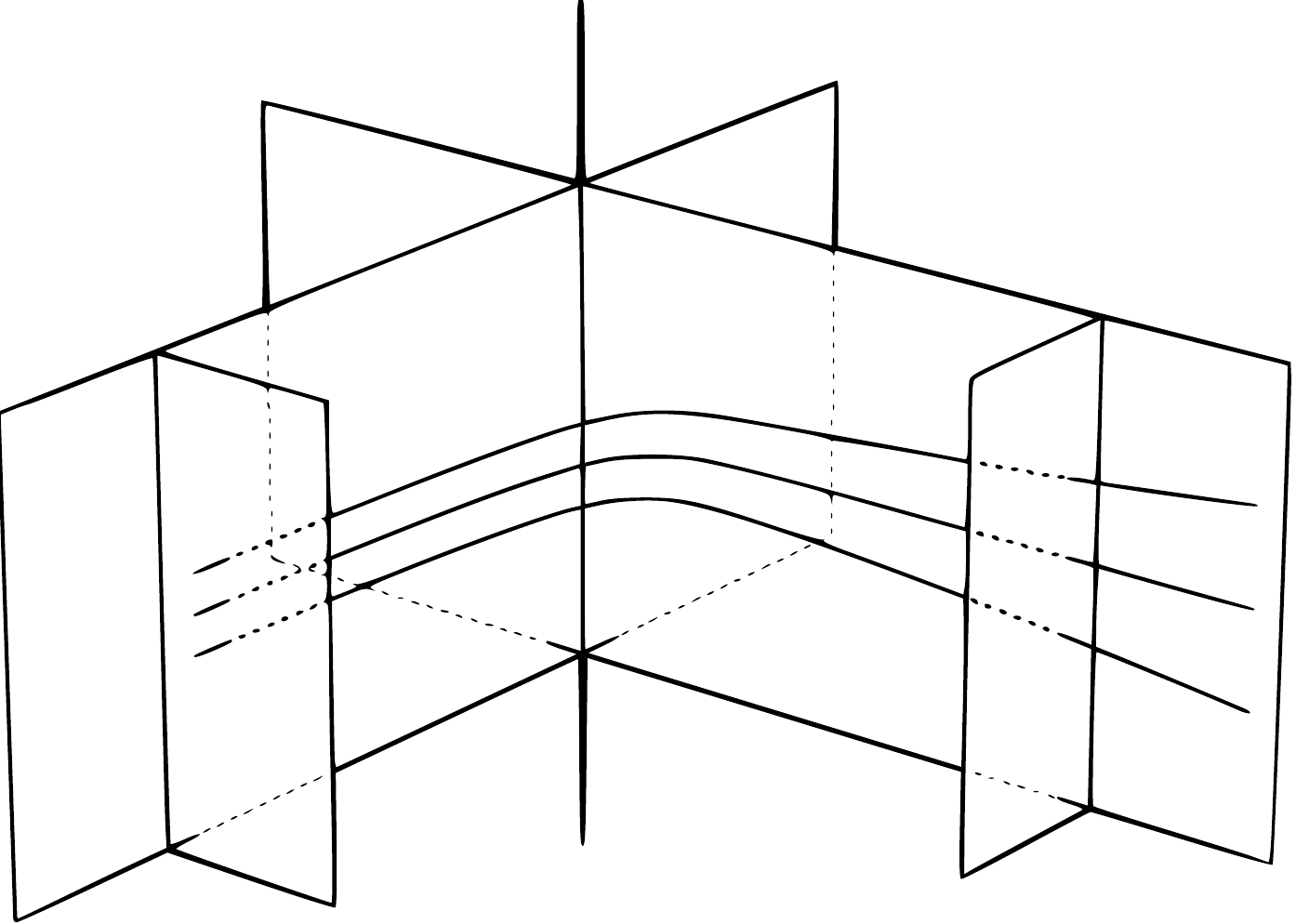}}%
    \put(0.19226834,0.18158677){\color[rgb]{0,0,0}\makebox(0,0)[lt]{\lineheight{2}\smash{\begin{tabular}[t]{l}$\Sigma_y$\end{tabular}}}}%
    \put(0.45610324,0.63021289){\color[rgb]{0,0,0}\makebox(0,0)[lt]{\lineheight{2}\smash{\begin{tabular}[t]{l}$\NHIM$\end{tabular}}}}%
    \put(0.22858569,0.45076244){\color[rgb]{0,0,0}\makebox(0,0)[lt]{\lineheight{2}\smash{\begin{tabular}[t]{l}$W^{sc}(\NHIM)$\end{tabular}}}}%
    \put(0.51271553,0.46464846){\color[rgb]{0,0,0}\makebox(0,0)[lt]{\lineheight{2}\smash{\begin{tabular}[t]{l}$W^{uc}(\NHIM)$\end{tabular}}}}%
    \put(0.76480075,0.18051859){\color[rgb]{0,0,0}\makebox(0,0)[lt]{\lineheight{2}\smash{\begin{tabular}[t]{l}$\Sigma_x$\end{tabular}}}}%
    \put(0,0){\includegraphics[width=\unitlength,page=2]{1dN.pdf}}%
    \put(0.48565879,0.29305695){\color[rgb]{0,0,0}\makebox(0,0)[lt]{\lineheight{2}\smash{\begin{tabular}[t]{l}$D$\end{tabular}}}}%
  \end{picture}%
\endgroup%

			\caption{Diagram of the case $ \NHIM $ is co-dimension 2 in $ \R^3 $}
			\label{fig:1dN}
		\end{figure}
		
		The normal form in Proposition \ref{prop:normalForm3DimN} gives a classification of vector fields $ X $ near a manifold of normally hyperbolic saddle singularities $ \NHIM $. Hence, by studying the flow of \eqref{eqn:3DimNFninNu} and  \eqref{eqn:3DimNFNu} we are able to ascertain properties of all flows near these objects. In particular, we seek an understanding of hyperbolic transitions near $ \NHIM $. 
		
		In what follows, we treat the most general case; when \eqref{eqn:3DimNFninNu} and \eqref{eqn:3DimNFNu} can be considered analytic. Hence, $ K $ will be considered $ \infty $. Finite $ K $ is easily recovered by truncating summations at the relevant order.
		
		Consider the section $ \Sigma = \partial\left([0,1] \times[-1,1]^2\right)\times \R^k $ defined in the normal form coordinates of \eqref{eqn:3DimNFninNu} or  \eqref{eqn:3DimNFNu}. A representation of $ \Sigma $ in relation to $ \NHIM $ is given in Figure \ref{fig:hyperbolic3d} for the case $ \mathcal{N} $ is dimension 0 inside $ \R^3 $ and in Figure \ref{fig:1dN} for the case $ \mathcal{N} $ is co-dimension 2.
		
		\begin{figure}[ht]
			\centering
	\def\svgwidth{0.4\textwidth}
\begingroup%
  \makeatletter%
  \providecommand\color[2][]{%
    \errmessage{(Inkscape) Color is used for the text in Inkscape, but the package 'color.sty' is not loaded}%
    \renewcommand\color[2][]{}%
  }%
  \providecommand\transparent[1]{%
    \errmessage{(Inkscape) Transparency is used (non-zero) for the text in Inkscape, but the package 'transparent.sty' is not loaded}%
    \renewcommand\transparent[1]{}%
  }%
  \providecommand\rotatebox[2]{#2}%
  \newcommand*\fsize{\dimexpr\f@size pt\relax}%
  \newcommand*\lineheight[1]{\fontsize{\fsize}{#1\fsize}\selectfont}%
  \ifx\svgwidth\undefined%
    \setlength{\unitlength}{462.42961121bp}%
    \ifx\svgscale\undefined%
      \relax%
    \else%
      \setlength{\unitlength}{\unitlength * \real{\svgscale}}%
    \fi%
  \else%
    \setlength{\unitlength}{\svgwidth}%
  \fi%
  \global\let\svgwidth\undefined%
  \global\let\svgscale\undefined%
  \makeatother%
  \begin{picture}(1,1.55699372)%
    \lineheight{1}%
    \setlength\tabcolsep{0pt}%
    \put(0,0){\includegraphics[width=\unitlength,page=1]{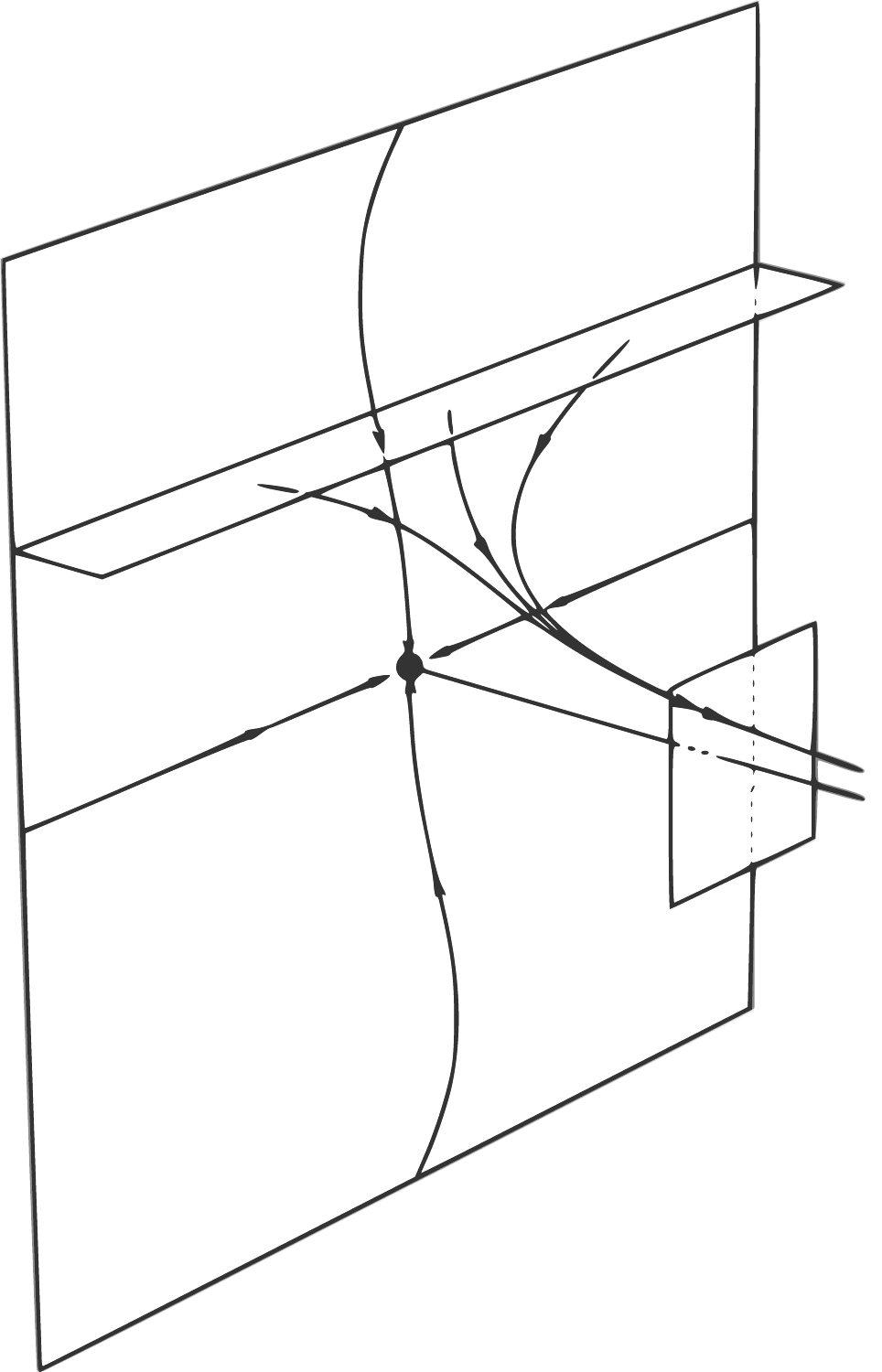}}%
    \put(0.86988149,1.27128549){\color[rgb]{0,0,0}\makebox(0,0)[lt]{\lineheight{2}\smash{\begin{tabular}[t]{l}$ \Sigma_z^+ $\end{tabular}}}}%
    \put(0.89132293,0.86884223){\color[rgb]{0,0,0}\makebox(0,0)[lt]{\lineheight{2}\smash{\begin{tabular}[t]{l}$ \Sigma_x $\end{tabular}}}}%
    \put(0.70329627,0.81441342){\color[rgb]{0,0,0}\makebox(0,0)[lt]{\lineheight{2}\smash{\begin{tabular}[t]{l}$D$\end{tabular}}}}%
    \put(0,0){\includegraphics[width=\unitlength,page=2]{hyperbolic3d.pdf}}%
    \put(0.3859497,0.81725673){\color[rgb]{0,0,0}\makebox(0,0)[lt]{\lineheight{2}\smash{\begin{tabular}[t]{l}$\NHIM$\end{tabular}}}}%
  \end{picture}%
\endgroup%

			\caption{Diagram for when $  \NHIM $ is co-dimension 3 in $ \R^3 $.}
			\label{fig:hyperbolic3d}
		\end{figure}
		
		The interior of $ \Sigma $ is an isolating neighbourhood of $ \NHIM $ in the region $ x \geq 0 $ and is transverse to the centre-stable and centre-unstable manifolds $ x=0 $ and $ y =z = 0 $ respectively. Now, decompose $ \Sigma $ into its various faces,
		\[ \Sigma_x := \Sigma\cap \{x=1 \},\qquad \Sigma_y^\pm := \Sigma\cap\{y=\pm 1 \},\qquad \Sigma_z^\pm := \Sigma \cap\{z=\pm 1 \} \] 
		and note that, due to the fact that $ x=0 $ is the centre-stable manifold, points $ p \in \Sigma_y^\pm \cup\Sigma_z^\pm $ must flow into the interior of $ \Sigma $. Provided that $ p \nin \{x=0\} $, that is $ p $ is not in the centre-stable manifold of $ \NHIM $, we are guaranteed that $ p $ is eventually flowed out of the interior of $ \Sigma $. For $ p $ taken sufficiently close to $ W^{sc}(\NHIM) $, the flow of $ p $ will intersect $ \Sigma_x $. It follows that there is a natural homeomorphism,
		$ \tilde{D}: \Sigma_y^\pm \cup\Sigma_z^\pm\setminus W^{sc}(\NHIM) \to \Sigma_x. $ Moreover, $ \tilde{D} $ admits an extension to a continuous map
		\[ D: \Sigma_y^\pm \cup\Sigma_z^\pm \to \Sigma_x^\pm. \]
		The primary achievement of this section is to obtain an explicit asymptotic series of $ D $ near $ x = 0 $. 
		
		Note that the choice of section $ \Sigma $ is arbitrary. However, the transition for any other choice of section, provided it is transverse to both the stable and unstable manifolds of $ \NHIM $, can be obtained by simply flowing points on $ \Sigma $ to the new section. This transition is smooth, and thus, does not influence the asymptotic structure of $ D $. 
		
		The particular choice of $ \Sigma $ made in this paper has historical precedent. Due to its relevance to Hilbert's $ 16^{th} $ problem, the case when $ \dot{u} = 0 $ and $ \NHIM $ is co-dimension 2 has been well studied; a review is given in \cite{roussarieBifurcationPlanarVector1998}. As $ \dot{u} = 0 $, this case can be considered as a family of hyperbolic singularities in the plane. In this context $ D $ is referred to as the \textit{Dulac map}. Before proceeding to the general case, it is worth mentioning some properties of the Dulac map in the planar case. 
		
		As per Remark \ref{rmk:planarnormalform}, the normal form for the planar case can be deduced from Proposition \ref{prop:normalForm3DimN} by considering $ u $ a parameter and restricting to $ z = 0 $ in case i). Explicitly, the normal form is
		\begin{equation*}
			\begin{aligned}
				\dot{x}	&= x \\
				\dot{y}	&= - \alpha(u) y + y \sum_{n \geq 1} \alpha_{n}(u) (x^p y^q)^n,
			\end{aligned}
		\end{equation*}
		with $ \alpha(0) = p/q \in \Q $.
		The Dulac map is the transition $ D: \Sigma_y^+ = \{y = 1\} \to \Sigma_x = \{ x = 1\} $. There are two key results known for Dulac maps in the planar case. First, if $ x_0 \in \Sigma_y^+ $ then the Dulac map near $ u = 0 $ is asymptotic to the series,
		\[ D(x_0) \sim x_0^{\alpha(u)}\left( 1 + \sum_{i \geq 1} g_i(u, x_0) x_0^{i p} \right), \]
		where $ g_i(x_0) $ is polynomial in the function,
		\begin{equation*}
			\omega(\alpha_1,x_0) = \begin{cases}
				\frac{x_0^{-\alpha_1} - 1}{\alpha_1}, & \alpha_1 \neq 0 \\
				-\ln x_0	& \alpha_1 =0
			\end{cases},
		\end{equation*}
		and $ \alpha_1(u) := \alpha(u) - \alpha(0) $. This function has been denoted the \textit{Ecalle-Roussarie compensator}. It was first introduced in \cite{roussarieNumberLimitCycles1986} and is detailed in \cite[sec.~5.1]{roussarieBifurcationPlanarVector1998}.
		
		The other key result is due to Mourtada \cite{mourtada1990cyclicite}. Setting $ g(u,x_0) = \sum g_i(u, x_0) x_0^{i p} $ it has been shown that,
		\[ \lim_{x_0 \to 0^+} x_0^n \frac{d^n g}{dx_0^n} = 0, \]
		for all $ n \in \N $ and uniformly in $ u $. Functions that exhibit this behaviour are known as \textit{Mourtada type functions}.
		
		Outside of the planar case little is known. Roussarie and Rousseau \cite{roussarieAlmostPlanarHomoclinic1996} investigated the so called `almost planar case'. They treat a family of hyperbolic saddles in $ \R^3 $ with the specific eigenvalue $ \beta(0) = 1 $ and with $ \alpha(0) \nin \Q $ to avoid resonance conditions of Proposition \ref{prop:normalForm3DimN}. In the framework of this paper this case corresponds to an $ \NHIM $ of co-dimension 3 and with $ u $ a parameter, that is, $ \dot{u} = 0 $. They explicitly computed the asymptotic structure of the Dulac map and showed it shares properties with the planar case, namely, its components are Mourtada type functions, and the asymptotic series again contains these $ \omega $ functions. However, by assuming the non-resonance conditions, in particular the case $ \alpha(0)/\beta(0) \in \N $, they did not investigate a crucial difference between the planar case and the co-dimension 3 case.
		
		To see this, take $ \alpha(0),\beta(0) \nin \Q $. From Proposition \ref{prop:normalForm3DimN} the normal form is simply,
		\begin{equation}\label{eqn:basicLinearNormalForm}
			\begin{aligned}
				\dot{x}	&= x \\
				\dot{y}	&= -\alpha(u) y + \alpha_{-1,0}(u) z^m \\
				\dot{z}	&= -\beta(u) z \\
				\dot{u}	&= 0
			\end{aligned}
		\end{equation}
		with $ \alpha_{-1,0}(u) = 0 $ if $ \alpha(0)/\beta(0) \nin \N $. Let $ (x_0,y_0,z_0,u_0)\in\Sigma_y^{\pm} \cup \Sigma_z^{\pm}  $ with $ y_0= \pm 1, z_0 = \pm 1 $ on $ \Sigma_y^{\pm} \cup \Sigma_z^{\pm} $ respectively and take $ (y_1,z_1,u_1) \in \Sigma_x $. Then system \eqref{eqn:basicLinearNormalForm} can be integrated to yield,
		\begin{equation}\label{eqn:nonResAsymptotics}
			\begin{aligned}
				t 	&= -\ln x_0\\
				y_1	&= x_0^{\alpha(u_0)}\left(y_0 + \alpha_{-1,0}(u) z_0^m \omega(\gamma_1(u_0),x_0)\right)\\
				z_1 &= x_0^{\beta(u_0)} z_0 \\
				u_1 &= u_0
			\end{aligned}
		\end{equation}
		with $ \gamma_1(u_0) = \alpha(u_0) - m \beta(u_0) $.
		
		The introduction of the term $ \omega(\gamma_1,x_0) $ due to the resonance $ \alpha(0)/\beta(0) $ prevents the Dulac map from having the same properties as in the planar case. However, for the case $ \dot{u} = 0 $, Bonckaert and Naudot \cite{bonckaertAsymptoticPropertiesDulac2001} were able to show, even in the resonant case, that the Dulac map will always have the form \eqref{eqn:nonResAsymptotics} to leading order. Specifically they showed, for $ D:\Sigma_z^+ \to \Sigma_x $,
		\begin{equation}
			\begin{aligned}
				y_1	&= x_0^{\alpha(u_0)}\left(y_0 + \alpha_{-1,0}(u) \omega(\gamma_1(u_0),x_0) + f(x_0,y_0) \right)\\
				z_1 &= x_0^{\beta(u_0)} (1 + g(x_0,y_0)),
			\end{aligned}
		\end{equation}
		with $ f,g $ functions of Mourtada type. No investigation was made to show the asymptotic structures of $ f,g $ or the case when $ \dot{u} \neq 0 $.
		
		In the remainder of the section we treat each of case i) and ii) from Proposition \ref{prop:normalForm3DimN} in the general case with $ \dot{u} \neq 0 $. The structure of $ f,g $ will be given in Theorem \ref{thm:asymStructureofDNinN} and Theorem \ref{thm:asymptoticStructureOfDN}. The approach taken in the proof of each theorem depends on whether the normal form \eqref{eqn:3DimNFninNu} or \eqref{eqn:3DimNFNu} is considered. The two approaches are similar in concept, but differ in some details.  
		
		\subsection{Case 1: $ \alpha(0) / \beta(0) \nin \N $}\label{sec:ninN}
			
			We proceed by first considering the case $ \alpha(0) / \beta(0) \nin \N $ but $ \alpha(0) = \frac{p_1}{q_1}  $ and $ \beta(0) = \frac{p_2}{q_2} $ with $ p_1, q_1 $ and $ p_2, q_2 $ pairs of co-prime positive integers. The normal form is given by \eqref{eqn:3DimNFninNu}.
			
			Introduce as coordinates \[ U_y = x^{p_1/q_1} y,\qquad U_z = x^{p_2/q_2} z \] and let 
			\[ \alpha(u_0) = \frac{p_1}{q_1} +  \alpha_1(u), \quad \beta(u_0) = \frac{p_2}{q_2} +  \beta_1(u), \]
			where $ \alpha_1,\beta_1 $ are $ O(u) $. Under this coordinate transform the normal form \eqref{eqn:3DimNFninNu} is brought into the form,
			\begin{equation}\label{eqn:c1variationSystemu}
				\begin{aligned}
					\dot{x}	&=  x \\
					\dot{U}_y	&= -\alpha_1(u) U_y + U_y \sum \alpha_{n_1,n_2}(u) U_y^{q_1 n_1} U_z^{q_2 n_2} \\
					\dot{U}_z	&= -\beta_1(u) U_z + U_z \sum \beta_{n_1,n_2}(u) U_y^{q_1 n_1} U_z^{q_2 n_2} \\
					\dot{u}	&= \sum \delta_{n_1,n_2}(u)  U_y^{q_1 n_1} U_z^{q_2 n_2}
				\end{aligned}
			\end{equation}
			The introduction of these coordinates brings the centre-stable manifold $ x = 0 $ to the invariant manifold $ U_y=U_z=0 $. 
			
			We follow \cite{roussarieBifurcationPlanarVector1998} by considering variations of the solutions on $ U_{y}=U_z=0, u = u_0 $. More explicitly, we consider a variation of each orbit $ (U_{y},U_z,u) = (0,0,u_0) $ by a small displacement in $ U_{y}, U_z $ denoted by $ U_{y0}, U_{z0} $ respectively. This variation can be written as a power series of the form,
			\begin{equation}\label{eqn:c1variationsu}
				\begin{aligned}
					U_{y}(U_{y0},U_{z0},u_0; t) 	&= U_{y}^{(1)}(u_0,t) U_{y0} + U_{y0}\sum U_{y}^{(n_1,n_2)}(u_0, t)  U_{y0}^{q_1 n_1} U_{z0}^{q_2 n_2} \\
					U_{z}(U_{y0},U_{z0},u_0; t) 	&= U_{z}^{(1)}(u_0,t) U_{z0} + U_{z0}\sum U_{z}^{(n_1,n_2)}(u_0, t) U_{y0}^{q_1 n_1} U_{z0}^{q_2 n_2}, \\
					u(U_{y0},U_{z0},u_0; t)	&= u_0 + \sum u^{(n_1,n_2)}(u_0, t) U_{y0}^{q_1 n_1} U_{z0}^{q_2 n_2}
				\end{aligned}
			\end{equation}
			with,
			\[  U_{y}^{(1)}(0) = U_{z}^{(1)}(0) = 1,\quad  U_{y}^{(n_1,n_2)}(u_0,0) = U_{z}^{(n_1,n_2)}(u_0,0) = u^{(n_1,n_2)}(u_0,0) = 0, \] so that at $ t=0 $, $ (U_{y},U_{z},u) = (U_{y0},U_{z0},u_0) $.
			
			Each of the coefficient functions $ U_{y}^{(n_1,n_2)}, U_z^{(n_1,n_2)}, u^{(n_1,n_2)} $, referred to as the \textit{variation coefficients}, can be computed through the variational equations. These equations are derived by substituting \eqref{eqn:c1variationsu} into system \eqref{eqn:c1variationSystemu} and equating coefficients of $ U_{y0}^{n_1} U_{z0}^{n_2} $. The first order equations are given by,
			\begin{equation*}
				\begin{aligned}
					\frac{d}{d t} U_{y}^{(1)}	&= -\alpha_1(u_0) U_{y}^{(1)}, 	& U_{y}^{(1)}(0) = 1, \\
					\frac{d}{d t} U_z^{(1)}	&= - \beta_1(u_0) U_z^{(1)} 	& U_z^{(1)}(0) = 1,
				\end{aligned}
			\end{equation*}
			Both equations are linear and hence admit explicit solutions,
			\begin{equation}\label{eqn:Y1Z1sol}
					U_{y}^{(1)}	= \me^{- \alpha_1(u_0) t},\quad	U_z^{(1)}	= \me^{- \beta_1(u_0) t}.
			\end{equation}
			
			The higher order variational equations are given for each $ (n_1,n_2) \in \N^2 $ by,
			\begin{equation}
				\begin{aligned}
					\frac{d}{d t} U_y^{(n_1,n_2)}	&= -\alpha_1(u_0) U_y^{(n_1,n_2)} + R_y^{(n_1,n_2)}, 			& U_y^{(n_1,n_2)}(0) = 0, \\
					\frac{d}{d t} U_{z}^{(n_1,n_2)}	&= -\beta_1(u_0) U_{z}^{(n_1,n_2)} + R_z^{(n_1,n_2)}, 	& U_{z}^{(n_1,n_2)}(0) = 0, \\
					\frac{d}{d t} u^{(n_1,n_2)}	&= R_u^{(n_1,n_2)},		& u^{(n_1,n_2)}(0) = 0,
				\end{aligned}
			\end{equation}
			with $ R_y^{(n_1,n_2)}, R_z^{(n_1,n_2)}, R_u^{(n_1,n_2)} $ polynomial in $ U_{z}^{(\tilde{n}_1,\tilde{n}_2)}, U_y^{(\tilde{n}_1,\tilde{n}_2)}, u^{(\tilde{n}_1,\tilde{n}_2)}  $ for $ \tilde{n}_1+\tilde{n}_2 < n_1 + n_2 $. The equations are linear, thus admit solutions,
			\begin{equation}\label{eqn:HigherVarSols}
				\begin{aligned}
					U_y^{(n_1,n_2)}	&= \me^{-\alpha_1(u_0) t} \int_0^{t} \me^{\alpha_1(u_0) \tau} R_y^{(n_1,n_2)}(\tau) d\tau \\
					U_{z}^{(n_1,n_2)}	&= \me^{-\beta_1(u_0) t} \int_0^{t} \me^{\beta_1(u_0) \tau} R_z^{(n_1,n_2)}(\tau) d\tau \\
					u^{(n_1,n_2)}	&= \int_0^t R_u^{(n_1,n_2)}(\tau) d\tau.
				\end{aligned}
			\end{equation}
			
			A more precise form of the variation coefficients can be given. Take $ \beta \in \R $ and similar to the works on bifurcation theory, for instance \cite{roussarieBifurcationPlanarVector1998}, introduce the function
			\begin{equation}\label{eqn:Omegadef}
				\Omega(\beta_1,t) := \int_{0}^{t} \me^{\beta_1 \tau} d\tau = \begin{cases}
					\frac{\me^{\beta_1 t} - 1}{\beta_1},	& \beta_1 \neq 0,\\
					t									& \beta_1 = 0.
				\end{cases}
			\end{equation}
			Note that $ \lim_{\beta_1 \to 0} \Omega(\beta_1,t) = \Omega(0,t)  $ so that $ \Omega(\beta_1,t) $ can be considered as a family of smooth functions continuous in $ \beta_1 $. 
			
			\begin{definition}~
				
				\begin{enumerate}
					\item Denote by $ \mathcal{O} $ the ring of functions smooth in $ u_0 $ in a neighbourhood of $ 0 $ and rational in $ \alpha,\beta_1\in\R $. 
					\item Denote by $ \mathcal{R}_{\alpha_1,\beta_1} $ the polynomial ring over $ \mathcal{O} $ with indeterminates $ \Omega(\pm\alpha_1,t), \Omega(\pm\beta_1,t), t $. That is,
					\[ \mathcal{R}_{\alpha_1,\beta_1} := \mathcal{O}\left[\Omega(\pm\alpha_1,t),\Omega(\pm\beta_1,t),t\right] . \]
					\item Define the subring $ \bar{\mathcal{R}}_{\alpha_1,\beta_1} $ of elements $ P(\alpha_1,\beta_1; t) \in \mathcal{R}_{\alpha_1,\beta_1} $ such that 
					\[ \lim_{\alpha_1,\beta_1\to 0} P(\alpha_1,\beta_1;t) =: P(0,0; t) \text{ exists,} \]
				\end{enumerate}
			\end{definition}
			
			For example, $ \alpha_1^{-1} \Omega(\alpha_1,t) $ is in $ \mathcal{R}_{\alpha_1,\beta_1} $ but not in $ \bar{\mathcal{R}}_{\alpha_1,\beta_1} $, whilst $ \alpha_1^{-1}(\Omega(\alpha_1,t) - t ) $ is in both.
			
			The following lemmas give essential properties of $ \mathcal{R}_{\alpha_1,\beta_1} $.
			\begin{lemma}\label{lem:closedUnderIntDiff}
				$ \mathcal{R}_{\alpha_1,\beta_1}, \bar{\mathcal{R}}_{\alpha_1,\beta_1} $ are closed under the operators,
				\[ I_t(P) := \int_0^t P(\alpha_1,\beta_1; \tau) d\tau, \qquad D_t(P) := \frac{d}{dt} P(\alpha_1,\beta_1;\tau). \] Moreover $ I_t : \bar{\mathcal{R}}^d_{\alpha_1,\beta_1} \to \bar{\mathcal{R}}^{d+1}_{\alpha_1,\beta_1} $ and $ D_t: \bar{\mathcal{R}}^{d+1}_{\alpha_1,\beta_1} \to \bar{\mathcal{R}}^{d}_{\alpha_1,\beta_1} $.
			\end{lemma}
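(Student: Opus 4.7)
My plan is to verify closure on the generators of $\mathcal{R}_{\alpha,\beta}$, extend by $\mathcal{O}$-linearity and Leibniz, and then use an analyticity argument to carry the statements over to $\bar{\mathcal{R}}_{\alpha,\beta}$ and the graded refinement. For $D_t$, the generators give $D_t(t)=1$ and $D_t(\Omega(\gamma,t)) = \me^{\gamma t} = \gamma\,\Omega(\gamma,t)+1$, each lying in $\mathcal{R}_{\alpha,\beta}$ since $\gamma \in \mathcal{O}$; the Leibniz rule and $\mathcal{O}$-linearity then give $D_t(\mathcal{R}_{\alpha,\beta})\subset\mathcal{R}_{\alpha,\beta}$.

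For $I_t$, I would rewrite every element of $\mathcal{R}_{\alpha,\beta}$ as an $\mathcal{O}$-linear combination of the monomials $\me^{(c\alpha+d\beta)t}t^k$ with $c,d\in\Z$ and $k\in\N$, which is possible via $\me^{\pm\alpha t}=\pm\alpha\,\Omega(\pm\alpha,t)+1$ and its $\beta$-analogue. A standard repeated integration by parts then gives, for $\gamma=c\alpha+d\beta\neq 0$,
\[ \int_0^t \me^{\gamma\tau}\tau^k\,d\tau = \sum_{j=0}^{k} \frac{(-1)^j k!}{(k-j)!\,\gamma^{j+1}}\,\me^{\gamma t}\,t^{k-j} + \frac{(-1)^{k+1}k!}{\gamma^{k+1}}, \]
while $\gamma=0$ yields $t^{k+1}/(k+1)$. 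Since $1/(c\alpha+d\beta)\in\mathcal{O}$ whenever $(c,d)\neq(0,0)$ and $\me^{\gamma t}\in\mathcal{R}_{\alpha,\beta}$, each such term lies in $\mathcal{R}_{\alpha,\beta}$, closing out $I_t$ by $\mathcal{O}$-linearity.

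To upgrade both conclusions to $\bar{\mathcal{R}}_{\alpha,\beta}$ and verify the shifts $I_t:\bar{\mathcal{R}}^d_{\alpha,\beta}\to\bar{\mathcal{R}}^{d+1}_{\alpha,\beta}$ and $D_t:\bar{\mathcal{R}}^{d+1}_{\alpha,\beta}\to\bar{\mathcal{R}}^d_{\alpha,\beta}$, I would argue by analyticity. Each monomial $\me^{(c\alpha+d\beta)t}t^k$ is jointly analytic in $(\alpha,\beta,t)$, so any $P\in\mathcal{R}_{\alpha,\beta}$ is rational in $(\alpha,\beta)$ with analytic-in-$t$ coefficients, and $P\in\bar{\mathcal{R}}_{\alpha,\beta}$ says exactly that the pole at $(\alpha,\beta)=(0,0)$ is removable. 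Since $D_t$ and $I_t$ act only in $t$ they preserve analyticity in $(\alpha,\beta)$ and commute with the limit $(\alpha,\beta)\to(0,0)$, so both send $\bar{\mathcal{R}}_{\alpha,\beta}$ into itself. The grading claim then follows because $\Omega(\gamma,t)\to t$ as $\gamma\to 0$ identifies the natural total degree on the generators with the $t$-degree of $P(0,0;t)$, which is clearly raised and lowered by one by $I_t$ and $D_t$, as checked on $t^k$ and $\Omega(\gamma,t)\,t^k$.

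The main obstacle lies in the $\bar{\mathcal{R}}_{\alpha,\beta}$ part: the integration formula above introduces factors $1/(c\alpha+d\beta)^{j+1}$ that are not individually in $\bar{\mathcal{R}}_{\alpha,\beta}$, so closure cannot be verified monomial-by-monomial. The analyticity argument is essential because it guarantees that, whenever the poles of $P$ cancel in the limit $(\alpha,\beta)\to(0,0)$, the apparent new poles in $I_t(P)$ cancel as well.
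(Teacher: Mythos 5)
Your proposal is correct and follows essentially the same route as the paper: both reduce to the monomials $t^{j}\me^{(n_1\alpha+n_2\beta)t}$ via $\me^{\pm\alpha t}=1\pm\alpha\,\Omega(\pm\alpha,t)$, integrate by parts (your closed form is just the unrolled version of the paper's recurrence for $K_j$), and compute $D_t$ directly on these monomials. The only divergence is the passage to $\bar{\mathcal{R}}_{\alpha,\beta}$, where you invoke removability of the singularity at $(\alpha,\beta)=(0,0)$ by joint analyticity while the paper appeals to dominated convergence; these are interchangeable here, and your version has the small advantage of also covering $D_t$ (where exchanging a derivative with a limit is not automatic) and of actually addressing the graded refinement $\bar{\mathcal{R}}^d_{\alpha,\beta}$, which the paper's proof leaves implicit.
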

			\begin{proof}
				If the result can be shown for $ \mathcal{R} $ then by the dominated convergence theorem it is automatically guaranteed for $ \bar{\mathcal{R}} $.  
				
				From the definition of $ \Omega $ in \eqref{eqn:Omegadef} one can see that
        any function $ P \in \mathcal{R}_{\alpha_1,\beta_1} $ can be
        written as a linear combination of functions of the form \[ t^j\me^{(n_1
            \alpha_1 + n_2 \beta_1)t}, \] for some $ j,n_1,n_2 \in \Z $. Through
        the linearity of the integral, it follows $ I_t(P) $ will be a linear
        combination of integrals \[ K_{j} := \int_0^t \tau^j \me^{(n_1 \alpha_1
            + n_2 \beta_1)\tau} d\tau. \] Each of these integrals has the
        recurrence formula
				\[ K_{j} = \frac{1}{n_1 \alpha_1 + n_2 \beta_1} t^{j}\me^{(n_1 \alpha_1 + n_2 \beta_1)t} - \frac{j}{n_1 \alpha_1 + n_2 \beta_1} K_{j-1}. \]
				The recurrence formula, together with the fact that $ \me^{n_1\alpha_1 t} = (1+ \alpha_1 \Omega(\alpha_1,t))^{n_1} $, gives closure of $ \mathcal{R}_{\alpha_1,\beta_1} $ under integration.
				
				Similarly, \[ D_t(t^j\me^{(n_1 \alpha_1 + n_2 \beta_1)t}) = (j t^{j-1} + (n_1 \alpha_1 + n_2 \beta_1) t^j ) \me^{(n_1 \alpha_1 + n_2 \beta_1)t}. \]
				Hence, the closure under $ D_t $ is guaranteed.
			\end{proof}
			\begin{lemma}\label{lem:polynomialR}
				Let $ P(\alpha_1,\beta_1;t) \in \bar{\mathcal{R}}_{\alpha_1,\beta_1} $. Then $ P(0,0;t) $ is polynomial in $ t $.
			\end{lemma}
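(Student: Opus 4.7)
The plan is to write $P$ explicitly as a quotient $Q/R$ where $Q$ is entire in $(\alpha, \beta)$ with Taylor coefficients polynomial in $t$, and then analyse the limit at the origin along a generic straight-line ray in $(\alpha, \beta)$-space.

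Clearing denominators from the rational coefficients in $\mathcal{O}$, any $P \in \mathcal{R}_{\alpha, \beta}$ can be written as $P = Q/R$ where $R(u_0; \alpha, \beta)$ is a polynomial in $(\alpha, \beta)$ (smooth in $u_0$) and $Q$ is a polynomial in $\Omega(\pm \alpha, t), \Omega(\pm \beta, t), \alpha, \beta, t$ (smooth in $u_0$). The crucial observation is that, via the entire expansion $\Omega(\alpha, t) = \sum_{k \geq 0} \alpha^k t^{k+1}/(k+1)!$, each $\Omega(\pm \alpha, t)$ and $\Omega(\pm \beta, t)$ is entire in $(\alpha, \beta)$ with Taylor coefficients polynomial in $t$, so $Q$ itself has a convergent Taylor expansion
\[
	Q(u_0; \alpha, \beta; t) = \sum_{a, b \geq 0} Q_{a, b}(u_0; t) \alpha^a \beta^b
\]
in which every $Q_{a, b}(u_0; t)$ is polynomial in $t$.

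Next, I would pick a generic constant $c \in \R \setminus \{0\}$ so that $R(u_0; \alpha, c\alpha)$ is not identically zero in $\alpha$ (all but finitely many $c$ work since $R$ is a nonzero polynomial). Let $d$ be the order of vanishing at $\alpha = 0$, so that $R(u_0; \epsilon, c\epsilon) = \epsilon^d \tilde{R}(u_0; \epsilon)$ with $\tilde{R}(u_0; 0) \neq 0$. Substituting into $Q$ gives
\[
	Q(u_0; \epsilon, c\epsilon; t) = \sum_{k \geq 0} \tilde{Q}_k(u_0; t) \epsilon^k, \qquad \tilde{Q}_k(u_0; t) := \sum_{a + b = k} c^b Q_{a, b}(u_0; t),
\]
with each $\tilde{Q}_k$ polynomial in $t$. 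By the hypothesis $P \in \bar{\mathcal{R}}_{\alpha, \beta}$, the joint limit equals the limit along the ray $(\alpha, \beta) = (\epsilon, c\epsilon)$:
\[
	P(0, 0; t) = \lim_{\epsilon \to 0} \frac{\sum_{k \geq 0} \tilde{Q}_k(u_0; t) \epsilon^{k - d}}{\tilde{R}(u_0; \epsilon)}.
\]
Finiteness of this limit for every $t$ forces $\tilde{Q}_k(u_0; t) \equiv 0$ as a polynomial in $t$ for every $k < d$, whence $P(0, 0; t) = \tilde{Q}_d(u_0; t)/\tilde{R}(u_0; 0)$, polynomial in $t$.

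The principal subtlety is not in the algebra but in making the path argument uniform: the order $d$ may a priori depend on $u_0$. One fixes a generic $c$ and shrinks the neighbourhood of $u_0 = 0$ so that the leading $\epsilon$-coefficient of $R(u_0; \epsilon, c\epsilon)$ is bounded away from zero, which ensures $d$ is locally constant and the conclusion holds uniformly for $u_0$ in a neighbourhood of the origin.
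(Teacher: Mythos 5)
Your proof is correct, and it takes a genuinely different route from the paper's. The paper expands $P$ as a linear combination of terms $f(\alpha,\beta)\,t^j \me^{(n_1\alpha+n_2\beta)t}$ with $f$ rational, and then differentiates repeatedly in $t$: each application of $D_t$ either lowers the power of $t$ or multiplies the rational coefficient by $(n_1\alpha+n_2\beta)$, so after finitely many derivatives every coefficient tends to $0$ as $(\alpha,\beta)\to 0$, forcing $\frac{d^{\tilde k}}{dt^{\tilde k}}P(0,0;t)=0$ for all large $\tilde k$. You instead clear denominators, expand the numerator as an entire power series in $(\alpha,\beta)$ with coefficients polynomial in $t$, and compare orders of vanishing along a generic ray $\beta=c\alpha$, using that the existence of the joint limit forces the low-order coefficients $\tilde Q_k$, $k<d$, to vanish identically. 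Your version buys two things: it avoids the interchange of $\lim_{\alpha,\beta\to 0}$ with $d^k/dt^k$ that the paper uses implicitly, and it avoids any degree-counting criterion for when a rational function of two variables tends to zero at the origin (a delicate point in two variables), since the ray restriction reduces everything to one-variable order-of-vanishing. The paper's version is shorter and exploits the closure of $\bar{\mathcal{R}}_{\alpha,\beta}$ under $D_t$ already established in Lemma \ref{lem:closedUnderIntDiff}. One small remark: your final paragraph on uniformity in $u_0$ is not needed for the lemma as stated, which is a pointwise assertion in $u_0$; it becomes relevant only if one wants the coefficients of the limiting polynomial to behave well in $u_0$.
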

			\begin{proof}
				$ P(\alpha_1,\beta_1;t) $ can be written as a linear combination of functions of the form, $ f(\alpha_1,\beta_1)t^j \me^{(n_1 \alpha_1 + n_2 \beta_1)t} $ where $ f $ is a rational function. As $ f $ is rational then by definition there exists $ p,q $ polynomial in $ \alpha_1, \beta_1 $ with $ f(\alpha_1,\beta_1) = p(\alpha_1,\beta_1) / q(\alpha_1,\beta_1) $. Let $ d_p,d_q $ be the degree of $p, q $ respectively. If $ d_p - d_q > 0 $ then $ \lim_{\alpha_1,\beta_1 \to 0} f(\alpha_1,\beta_1) = 0 $.
				
				Now, if $ P\in \bar{\mathcal{R}}_{\alpha_1,\beta_1} $ we must have $  \lim_{\alpha_1,\beta_1\to 0} \frac{d^k}{dt^k} P(\alpha_1,\beta_1;t) = \frac{d^k}{dt^k} P(0,0;t) $. The derivative $ d/dt  f(\alpha_1,\beta_1)t^j  $ gives the function $(j t^{j-1} + (n_1 \alpha_1 + n_2 \beta_1) t^j ) \me^{(n_1 \alpha_1 + n_2 \beta_1)t} $ which is the sum of a function of one degree less in $ t $ and a function with coefficient $ (n_1 \alpha_1 + n_2 \beta_1) p(\alpha_1,\beta_1)/q(\alpha_1,\beta_1) $. The coefficient is again rational with sum of degrees $ d_p - d_q +1 $. Hence, there exists $ k < \infty $ such that, for all $ \tilde{k} > k $, $ \frac{d^{\tilde{k}}}{dt^{\tilde{k}}} P(\alpha_1,\beta_1;t) $ contains only terms with coefficients $ f = p/q $ with sum of degrees $ d_p - d_q > 0 $. Taking the limit $ \alpha_1,\beta_1 \to 0 $ gives $ \frac{d^{\tilde{k}}}{d t^{\tilde{k}}} P(0,0; t) = 0 $ for all $ \tilde{k} > k $. It follows that $ P (0,0;t) $ is polynomial in $ t $.
			\end{proof}

			With the definition of $ \mathcal{R}_{\alpha_1,\beta_1} $ given and the preceding lemmas, we have the following proposition on the form of the variation coefficients.
			\begin{proposition}\label{prop:varStructureNinN}
				For all $ (n_1,n_2) \in \N^2 $ there exists functions \[ \tilde{U}_y^{(n_1,n_2)}(u_0,t),\tilde{U}_z^{(n_1,n_2)}(u_0,t), \tilde{u}_i^{(n_1,n_2)}(u_0,t) \in \bar{\mathcal{R}}_{\alpha_1,\beta_1},\qquad i = 1,\dots,k, \] 
				such that,
				\begin{align*}
					U_y^{(n_1,n_2)}(u_0; t) &= \me^{-\alpha_1(u_0) t} \tilde{U}_y^{(n_1,n_2)}(u_0,t) \\
					U_z^{(n_1,n_2)}(u_0; t) &= \me^{-\beta_1(u_0) t} \tilde{U}_z^{(n_1,n_2)}(u_0,t) \\
					u^{(n_1,n_2)}(u_
					0; t) &= \tilde{u}^{(n_1,n_2)}(t)
				\end{align*}
				with $ \tilde{u}^{(n_1,n_2)} := \left(u_1^{(n_1,n_2)},\dots,\tilde{u}_k^{(n_1,n_2)}\right) $. Moreover:
				\begin{enumerate}[i)]
					\item Each $ \tilde{U}_y^{(n_1,n_2)},\tilde{U_z}^{(n_1,n_2)},\tilde{u}^{(n_1,n_2)} $ is polynomial in $ \alpha_{\tilde{n}_1,\tilde{n}_2},\beta_{\tilde{n}_1,\tilde{n}_2},\delta_{\tilde{n}_1,\tilde{n}_2} $ for $ \tilde{n}_1+\tilde{n}_2 \leq n_1+n_2 $.
					\item If $ \alpha_{n_1,n_2} $ (resp. $ \beta_{n_1,n_2},\delta_{n_1,n_2}^i $ ) vanish for $ n_1 + n_2 \leq n\in \N $ then $ U_y^{(n_1,n_2)}(t)  $ (resp. $ U_z^{(n_1,n_2)}, U_{u_i}^{(n_1,n_2)} $) vanish for $ n_1 + n_2 \leq n.$
				\end{enumerate}
			\end{proposition}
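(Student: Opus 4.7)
My plan is to proceed by induction on the total order $N = n_1 + n_2$, using the explicit integral representations in \eqref{eqn:HigherVarSols} and the closure properties from Lemma \ref{lem:closedUnderIntDiff}. The base case consists of the first-order solutions \eqref{eqn:Y1Z1sol}: $U_y^{(1)} = \me^{-\alpha_1 t}$ and $U_z^{(1)} = \me^{-\beta_1 t}$ immediately give $\tilde{U}_y^{(1)} = \tilde{U}_z^{(1)} = 1 \in \bar{\mathcal{R}}_{\alpha_1,\beta_1}$, and the $u$-components are trivially zero at this order.

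For the inductive step, assume the claim for all $(\tilde{n}_1, \tilde{n}_2)$ with $\tilde{n}_1 + \tilde{n}_2 < N$. The source $R_y^{(n_1,n_2)}$ is obtained by substituting the variation ansatz \eqref{eqn:c1variationsu} into \eqref{eqn:c1variationSystemu}, Taylor expanding the smooth coefficient functions $\alpha_1(u), \alpha_{m_1,m_2}(u)$ in $u$ about $u_0$, and extracting the coefficient of $U_{y0}^{q_1 n_1 + 1} U_{z0}^{q_2 n_2}$. The result is a finite polynomial combination of lower-order variation coefficients with coefficients smooth in $u_0$. By the inductive hypothesis each such factor is $\me^{-\alpha_1 t}$, $\me^{-\beta_1 t}$, or $1$ times an element of $\bar{\mathcal{R}}_{\alpha_1,\beta_1}$. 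The identity $\me^{n\alpha_1 t} = (1 + \alpha_1 \Omega(\alpha_1, t))^n$ shows $\me^{(n\alpha_1 + m\beta_1)t} \in \bar{\mathcal{R}}_{\alpha_1,\beta_1}$ for all $n, m \in \Z$, and since $\bar{\mathcal{R}}$ is a ring, the product $\me^{\alpha_1 \tau} R_y^{(n_1,n_2)}(\tau)$ lies in $\bar{\mathcal{R}}_{\alpha_1,\beta_1}$. Lemma \ref{lem:closedUnderIntDiff} then yields $\tilde{U}_y^{(n_1,n_2)} = I_t\bigl(\me^{\alpha_1 \tau} R_y^{(n_1,n_2)}\bigr) \in \bar{\mathcal{R}}_{\alpha_1,\beta_1}$, with parallel arguments for $\tilde{U}_z^{(n_1,n_2)}$ and $\tilde{u}^{(n_1,n_2)}$.

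The main obstacle is the bookkeeping required to confirm that, after substitution and extraction, the exponential factors genuinely combine into the form $\me^{(n\alpha_1 + m\beta_1)t}$ with integer $n, m$ rather than producing more exotic combinations that would escape $\bar{\mathcal{R}}$. A homogeneity argument resolves this: each $U_y$ factor in any monomial on the right-hand side of \eqref{eqn:c1variationSystemu} carries weight one in $U_{y0}$ together with one factor of $\me^{-\alpha_1 t}$, and each $U_z$ factor carries weight one in $U_{z0}$ together with $\me^{-\beta_1 t}$, so equating weights of the $U_{y0}, U_{z0}$ monomial fixes the number of each type of factor and hence the integer exponents that appear.

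Properties (i) and (ii) then follow immediately from the inductive construction. For (i), polynomial dependence on $\alpha_{\tilde{n}_1,\tilde{n}_2}, \beta_{\tilde{n}_1,\tilde{n}_2}, \delta^i_{\tilde{n}_1,\tilde{n}_2}$ is preserved under the operations of multiplication and $I_t$. For (ii), under the stated vanishing of the normal form coefficients, every contribution to $R_y^{(n_1,n_2)}$, $R_z^{(n_1,n_2)}$, and $R_u^{(n_1,n_2)}$ at orders $n_1 + n_2 \leq n$ necessarily carries a factor of one of the vanishing coefficients, so the sources, and hence by the explicit integral formulas the variation coefficients themselves, vanish at these orders.
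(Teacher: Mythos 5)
Your proposal is correct and follows essentially the same route as the paper's own proof: induction on $n_1+n_2$, the explicit integral solutions \eqref{eqn:HigherVarSols}, the identity $\me^{\kappa t} = 1+\kappa\Omega(\kappa,t)$ to absorb the exponential prefactors, and closure of $\bar{\mathcal{R}}_{\alpha_1,\beta_1}$ under $I_t$ from Lemma \ref{lem:closedUnderIntDiff}. Your added bookkeeping (Taylor expansion of the $u$-dependent coefficients about $u_0$ and the homogeneity argument fixing the integer exponents $n\alpha_1+m\beta_1$) is a more careful spelling-out of steps the paper leaves implicit, not a different method.
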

			\begin{proof}
				The proposition will be proved by induction on $ k = n_1 + n_2 $. From \eqref{eqn:Y1Z1sol} it is known that 
				\[ U_y^{(0,0)} = U_y^{(1)} = \me^{-\alpha_1(u_0) t }\cdot 1,\quad U_z^{(0,0)}=U_z^{(1)} = \me^{-\beta_1(u_0) t} \cdot 1,\quad u^{(0,0)} = u_0.\]
				As $ 1 $ and each component of $ u_0 $ are elements of $ \bar{\mathcal{R}}_{\alpha_1,\beta_1} $ the result is true for $ k = 0 $.
				
				Now assume true for all $ n_1,n_2 \in N $ such that $ n_1 + n_2 < k $. Take any $ n_1,n_2 \in \N $ with $ n_1 + n_2 = k $ and let $ K $ represent each of $ U_y,U_z,u $. It was shown that each $ K^{(n_1,n_2)} $ are given by the solutions to the variational equations computed in \eqref{eqn:HigherVarSols}. As remarked before \eqref{eqn:HigherVarSols}, each $ R_K^{(n_1,n_2)} $ is a polynomial in $ K^{(\tilde{n}_1,\tilde{n}_1)} $ for $ \tilde{n}_1 + \tilde{n}_2 \leq n_1 + n_2 = k $, and as such, if each $ K^{(\tilde{n}_1,\tilde{n}_1)} \in \bar{\mathcal{R}}_{\alpha_1,\beta_1} $ by assumption, then $ R_K^{(n_1,n_2)} \in \bar{\mathcal{R}}_{\alpha_1,\beta_1} $. Furthermore, $ \me^{\kappa t} = (1+\kappa \Omega(\kappa, t)) $ for $ \kappa = \alpha_1,\beta_1,-\alpha_1,-\beta_1 $. Hence, $ \me^{\alpha_1(u_0) t} R_y^{(n_1,n_2)} , \me^{\beta_1(u_0) t} R_z^{(n_1,n_2)}, R_u^{(n_1,n_2)} $ are all elements of $ \bar{\mathcal{R}}_{\alpha_1,\beta_1} $.
				
				By Lemma \ref{lem:closedUnderIntDiff} $ \bar{\mathcal{R}}_{\alpha_1,\beta_1} $ is closed under integration. Thus we can set 
				\begin{align*}	
					\tilde{U}_y^{(n_1,n_2)} &:= \int_0^t (1+\alpha_1 \Omega(\alpha_1,\tau))R_y^{(n_1,n_2)} d\tau, \\
					\tilde{U_z}^{(n_1,n_2)} &:= \int_0^t (1+\beta_1 \Omega(\beta_1,\tau))R_z^{(n_1,n_2)} d\tau, \\
					\tilde{u}^{(n_1,n_2)} &:= \int_0^t R_u^{(n_1,n_2)} d\tau,
				\end{align*}
				to conclude the proposition. 
				
				The fact that $ \tilde{U}_y^{(n_1,n_2)},\tilde{U_z}^{(n_1,n_2)}(t), \tilde{u}_i^{(n_1,n_2)}(t)  $ are polynomial in $ \alpha_{n_1,n_2}, \beta_{n_1,n_2}, \delta_{n_1,n_2} $ is a consequence of the polynomial nature of $ R_y,R_z,R_u $. Property ii) follows from the fact that the remainder terms vanish if there are no lower order nonlinear terms in \eqref{eqn:c1variationSystemu}.
			\end{proof}
			
			At last we return to the Dulac map $ D $. The time to go from $ \Sigma_y^\pm \cup \Sigma_z^\pm $ to $ \Sigma_x $ can be computed from $ \dot{x} = x $ as simply $ t = -\ln x_0 $. The transition maps can be derived from the solution to the variational equations using at $ t = 0 $, $ (U_{y0},U_{z0}) = (x_0^{p_1/q_1} y_0, x_0^{p_2/q_2} z_0) $ and at $ t = -\ln x_0 $, $ (U_y,U_z,u) = (y_1,z_1,u_1) $. That is,
			\begin{equation}\label{eqn:DFormNinN}
				\begin{aligned}
					y_1 	&= U_y(x_0^{p_1/q_1} y_0,x_0^{p_2/q_2} z_0,u_0,-\ln x_0), \\
					z_1 	&= U_z(x_0^{p_1/q_1} y_0,x_0^{p_2/q_2} z_0,u_0,-\ln x_0), \\
					u_1		&= u(x_0^{p_1/q_1} y_0,x_0^{p_2/q_2} z_0,u_0,-\ln x_0), 
				\end{aligned}
			\end{equation} 
			with $ y_0 = \pm 1 $, $ z_0 = \pm 1 $ when mapping from $ \Sigma_y^\pm,\Sigma_z^\pm $ respectively.
			
			Define the Ecalle-Roussarie compensator by,
			\begin{equation}
				\begin{cases}
					\omega(\alpha_1,x) = \displaystyle\frac{x^{-\alpha_1} - 1}{\alpha_1} & \alpha_1 \neq 0 \\
					\omega(0,x) = -\ln x
				\end{cases}.
			\end{equation}
			The function $ \omega $ is related to $ \Omega $ by 
			\[ \omega(\alpha_1,x) = \Omega(\alpha_1,-\ln x). \] 
			By taking $ t = -\ln x $ in the definition of $ \mathcal{R}_{\alpha_1,\beta_1}, \bar{\mathcal{R}}_{\alpha_1,\beta_1} $ there are induced rings $ \mathcal{R}_{\alpha_1,\beta_1}^\omega,\bar{\mathcal{R}}^\omega_{\alpha_1,\beta_1} $. 
			
			At last, we have the following theorem on the asymptotic structure of the Dulac map.			
			\begin{thm}\label{thm:asymStructureofDNinN}
				Suppose that $ \alpha(0)/\beta(0) \nin \N $. Then the Dulac map $ D $ is asymptotic to the series
				\begin{equation}
					\begin{aligned}
						y_1	&\sim x_0^{\alpha(u_0)} y_0 \left( 1  + \sum_{n_1+n_2 \geq 1} \bar{U_y}^{(n_1,n_2)}(u_0; x_0) (x_0^{p_1} y_0^{q_1})^{n_1}( x_0^{p_2} z_0^{ q_2})^{n_2} \right) \\
						z_1	&\sim x_0^{\beta(u_0)} z_0 \left(  1 + \sum_{n_1 + n_2 \geq 1} \bar{U_{z}}^{(n_1,n_2)}(u_0; x_0) (x_0^{p_1} y_0^{q_1})^{n_1}( x_0^{p_2} z_0^{ q_2})^{n_2} \right)  \\
						u_1	&\sim u_0 + \sum_{n_1 + n_2 \geq 1} \bar{u}^{(n_1,n_2)}(u_0; x_0) (x_0^{p_1} y_0^{q_1})^{n_1}( x_0^{p_2} z_0^{ q_2})^{n_2} 
					\end{aligned}
				\end{equation}
				with $ y_0 = \pm 1 $, $ z_0 = \pm 1 $ when mapping from $ \Sigma_y^\pm,\Sigma_z^\pm $ respectively. Each coefficient $ K^{(n_1,n_2)} = \bar{U_y}^{(n_1,n_2)}, \bar{U_{z}}^{(n_1,n_2)} $ or $\bar{u}_i^{(n_1,n_2)}$, $ i=1,\dots,k, $ has the properties:
				\begin{enumerate}[i)]
					\item $ K^{(n_1,n_2)}\in \bar{\mathcal{R}}^\omega_{\alpha_1,\beta_1} $. \\
					\item If $ \alpha(u_0), \beta(u_0) $ are constant then $ K^{(n_1,n_2)} $ is polynomial in $ \ln x_0 $.
					\item $ K^{(n_1,n_2)} $ is polynomial in $ \alpha_{\tilde{n}_1,\tilde{n}_2},\beta_{\tilde{n}_1,\tilde{n}_2},\delta_{\tilde{n}_1,\tilde{n}_2} $ for $ \tilde{n}_1+\tilde{n}_2 \leq n_1+n_2 $ with vanishing constant term.
					\item If $ \alpha_{n_1,n_2} $ (resp. $ \beta_{n_1,n_2},\delta_{n_1,n_2}^i $ ) vanish for $ n_1 + n_2 \leq n\in \N $ then $ \bar{U}_y^{(n_1,n_2)}(t)  $ (resp. $ \bar{U}_z^{(n_1,n_2)}, \bar{U}_{u_i}^{(n_1,n_2)} $) vanish for $ n_1 + n_2 \leq n.$
				\end{enumerate}
			\end{thm}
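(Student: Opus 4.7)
The plan is to assemble the theorem by combining the $ C^k $ normal form of Corollary \ref{cor:CkNormalFOrm} with the variational analysis already carried out in Proposition \ref{prop:varStructureNinN}, then to evaluate along the Dulac trajectory using the two identifications $ t = -\ln x_0 $ and $ U_{y0} = x_0^{p_1/q_1} y_0 $, $ U_{z0} = x_0^{p_2/q_2} z_0 $ with $ y_0, z_0 = \pm 1 $. I would first invoke Corollary \ref{cor:CkNormalFOrm} to $ C^k $-conjugate $ X $ to any prescribed finite-order truncation $ X^{K(k)} $ of the normal form \eqref{eqn:3DimNFninNu}, pass to the resonant coordinates $ (U_y, U_z, u) $ so as to work in system \eqref{eqn:c1variationSystemu}, and thereby reduce the problem to analysing the flow of the truncated system near its invariant manifold $ \{U_y = U_z = 0\} $.

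The next step is substitution of \eqref{eqn:c1variationsu} into \eqref{eqn:DFormNinN}. The leading factor $ \me^{-\alpha_1(u_0) t} $ becomes $ x_0^{\alpha_1(u_0)} $ and, combined with the factor $ x_0^{p_1/q_1} y_0 $ coming from $ U_{y0} $, produces the leading $ x_0^{\alpha(u_0)} y_0 $ on the $ y_1 $ line; the analogous computation produces the leading $ x_0^{\beta(u_0)} z_0 $ on the $ z_1 $ line. Each monomial $ U_{y0}^{1+q_1 n_1} U_{z0}^{q_2 n_2} $ expands as $ x_0^{p_1/q_1 + p_1 n_1 + p_2 n_2} y_0^{1+q_1 n_1} z_0^{q_2 n_2} $, which factors the required $ x_0^{\alpha(u_0)} y_0 (x_0^{p_1} y_0^{q_1})^{n_1} (x_0^{p_2} z_0^{q_2})^{n_2} $ out of the summand. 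Since $ \Omega(\kappa, -\ln x_0) = \omega(\kappa, x_0) $ for every $ \kappa $, the coefficients $ \tilde{U_y}^{(n_1,n_2)}, \tilde{U_z}^{(n_1,n_2)}, \tilde{u}^{(n_1,n_2)} \in \bar{\mathcal{R}}_{\alpha_1,\beta_1} $ supplied by Proposition \ref{prop:varStructureNinN} are carried to elements $ \bar{U_y}^{(n_1,n_2)}, \bar{U_z}^{(n_1,n_2)}, \bar{u}^{(n_1,n_2)} \in \bar{\mathcal{R}}^\omega_{\alpha_1,\beta_1} $, which is property (i). Property (iv) is the verbatim restatement of Proposition \ref{prop:varStructureNinN}(ii). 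For (iii), when all $ \alpha_{n_1,n_2}, \beta_{n_1,n_2}, \delta_{n_1,n_2} $ vanish the non-linear part of \eqref{eqn:c1variationSystemu} disappears and every higher variation coefficient is identically zero, so the polynomial dependence of Proposition \ref{prop:varStructureNinN}(i) must have vanishing constant term. Property (ii) then follows from Lemma \ref{lem:polynomialR}: if $ \alpha(u_0), \beta(u_0) $ are constant then $ \alpha_1 \equiv \beta_1 \equiv 0 $, forcing every element of $ \bar{\mathcal{R}}_{\alpha_1,\beta_1} $ to be polynomial in $ t = -\ln x_0 $.

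The main obstacle is the rigorous justification of the asymptotic character of the expansion. Proposition \ref{prop:varStructureNinN} treats \eqref{eqn:c1variationsu} as a formal Taylor expansion of the true flow in $ (U_{y0}, U_{z0}) $, so closing the argument requires, for each $ k $, a genuine finite-order Taylor remainder estimate for the flow of the polynomial truncation $ X^{K(k)} $ together with a separate estimate that the $ C^k $-flat remainder $ X - X^{K(k)} $ contributes only a $ C^k $-flat correction to the Dulac map in the transverse variables. Once both are combined and $ k $ is sent to infinity the two expansions match asymptotically, yielding the displayed series.
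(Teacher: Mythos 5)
Your proposal follows essentially the same route as the paper's proof: substitute the variational expansion \eqref{eqn:c1variationsu} into \eqref{eqn:DFormNinN} with $t=-\ln x_0$ and $U_{y0}=x_0^{p_1/q_1}y_0$, $U_{z0}=x_0^{p_2/q_2}z_0$, factor out the leading powers, and read off properties i)--iv) from Proposition \ref{prop:varStructureNinN} and Lemma \ref{lem:polynomialR}. The remainder-estimate issue you flag as the ``main obstacle'' is in fact left unaddressed by the paper as well, so your write-up is, if anything, more candid about what a fully rigorous justification of the asymptotic character would require.
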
 
			\begin{proof}
				The proof is primarily a consequence of Proposition \ref{prop:varStructureNinN} and the form of $ D $ given in \eqref{eqn:DFormNinN}. The explicit computation is given for $ y_1 $ with the $ z_1,u_1 $ following analogously. It is given that,
				\[ 	y_1 	= U_y(x_0^{p_1/q_1} y_0,x_0^{p_2/q_2} z_0, u_0, -\ln x_0). \]
				An asymptotic expansion for $ U_y $ is given by the variation of $ U_y $ in \eqref{eqn:c1variationsu}, that is, 
				\[ U_y(U_{y0},U_{z0},u_0; t) 	\sim U_{y}^{(1)}(u_0,t) U_{y0} + U_{y0}\sum U_{y}^{(n_1,n_2)}(u_0, t)  U_{y0}^{q_1 n_1} U_{z0}^{q_2 n_2}. \]
				Then, from Proposition \ref{prop:varStructureNinN} each of the variational coefficients $ U_{y}^{(n_1,n_2)}(u_0,t) $ has the structure,
				\[ U_{y}^{(n_1,n_2)}(u_0, t) = \me^{-\alpha_1(u_0) t} \tilde{U_{y}}^{(n_1,n_2)}(t) \]
				with $ \tilde{U_{y}}^{(n_1,n_2)}(t) \in \mathcal{R}_{\alpha_1,\beta_1}. $ By substituting $ t = - \ln x_0 $, it follows,
				\[  U_{y}^{(n_1,n_2)}(u_0, -\ln x_0) =  x_0^{\alpha_1(u_0)} \hat{U_{y}}^{(n_1,n_2)}(x_0), \]
				for some $ \hat{U_{y}}^{(n_1,n_2)}(x_0)\in \mathcal{R}_{\alpha_1,\beta_1}^\omega $. Hence,
				\begin{align*}
					y_1 &\sim x_0^{\alpha_1(u_0)}U_{y0} + U_{y0}\sum x_0^{\alpha_1(u_0)}\hat{U_{y}}^{(n_1,n_2)}(u_0; x_0)  U_{y0}^{q_1 n_1} U_{z0}^{q_2 n_2} \\
					 &= x_0^{\alpha_1(u_0)}x_0^{p_1/q_1} y_0+ x_0^{p_1/q_1} y_0\sum x_0^{\alpha_1(u_0)}\hat{U_{y}}^{(n_1,n_2)}(u_0; x_0) (x_0^{p_1/q_1} y_0)^{q_1 n_1} (x_0^{ p_2/q_2} z_0 )^{n_2} \\
					 &= x_0^{p_1/q_1 + \alpha_1(u_0) } y_0\left( 1 + \sum \hat{U_{y}}^{(n_1,n_2)}(u_0; x_0)  y_0^{n_1 q_1} z_0^{n_2 q_2} x_0^{n_1 p_1+ n_2 p_2} \right).
				\end{align*}
				The desired asymptotic form of the $ y_1 $ component of $ D $ follows. 
							
				Properties i), iii) and iv) follow immediately from Proposition \ref{prop:varStructureNinN}. If $ \alpha(u_0), \beta(u_0) $ are constant then $ \alpha_1(u_0)=\beta_1(u_0)=0 $. The form can be computed by taking $$ \lim_{\alpha_1,\beta_1 \to 0} \hat{U_{y}}^{(n_1,n_2)}(u_0; x_0). $$ As $ \hat{U_{y}}^{(n_1,n_2)} \in \bar{\mathcal{R}}_{\alpha_1,\beta_1} $ then Lemma \ref{lem:polynomialR} gives property ii).
			\end{proof}
		
			\begin{remark}
				Setting $ z_0 = 0, y_0 = 1 $ gives the Dulac map of a co-dimension 2 manifold of normally hyperbolic saddle singularities. If it is further assumed that $ u $ is merely a parameter, that is $ \dot{u} =0 $, then Theorem \ref{thm:asymStructureofDNinN} gives the asymptotic structure of the transition near a family of planar hyperbolic saddles. This result agrees with \cite{roussarieBifurcationPlanarVector1998}.
			\end{remark}
		
		\subsection{Case 2: $ \alpha(0)/\beta(0) \in \N $}
			In this section we treat the case $ \alpha(0)/\beta(0) \in \N $. The general approach is the same as in the previous section, however some minor care needs to be taken when dealing with the coefficients $ \alpha_{-1,n_2},\beta_{n_1,-1} $ in the normal form \eqref{eqn:3DimNFNu}.
			
			To make summation symbols less cumbersome, define the following subsets of $ \N^2 $,
			\begin{equation}
				\begin{aligned}
					N_1	&:= \set{(n_1,n_2) \in \N^2}{n_1 \geq -1, q n_2 - m n_1 \geq 0, (n_1,n_2)\neq 0} \\
					N_2	&:= \set{(n_1,n_2) \in \N^2}{n_1 \geq 0, q n_2 - m n_1 \geq -1, (n_1,n_2)\neq 0}\\
					N_3	&:= \set{(n_1,n_2) \in \N^2}{n_1 \geq 0, q n_2 - m n_1 \geq 0, (n_1,n_2)\neq 0}.
				\end{aligned}
			\end{equation}
			
			Then, introduce as coordinates 
			\[ U_y = x^{m p /q} y,\qquad U_z = x^{p/q} z, \]
			and define $ \alpha_1,\beta_1 $ through,
			\[ \alpha(u_0) = m\frac{p}{q} + \alpha_1(u),\quad \beta(u_0) = \frac{p}{q} + \beta_1(u). \]
			In these new coordinates the normal form \eqref{eqn:3DimNFNu} is transformed to the vector field,
			\begin{equation}
				\begin{aligned}
					\dot{x}	&=  x \\
					\dot{U}_y	&= -\alpha_1(u) U_y + U_y \sum_{(n_1,n_2) \in N_1} \alpha_{n_1,n_2}(u) U_y^{n_1} U_z^{q n_2 - m n_1} \\
					\dot{U}_z	&= -\beta_1(u) U_z + U_z \sum_{(n_1,n_2)\in N_2} \beta_{n_1,n_2}(u) U_y^{n_1} U_z^{ q n_2 - m n_1} \\
					\dot{u}	&= \sum_{(n_1,n_2)\in N_3} \delta_{n_1,n_2}(u)  U_y^{ n_1} U_z^{ n_2}
				\end{aligned}
			\end{equation}
			The crucial achievement of the coordinate transform is to decouple $ U_y,U_z,u $ from $ x $. 
			
			The centre-stable manifold $ x = 0 $ has been brought to $ U_z=U_y = 0 $. Similar to Section \ref{sec:ninN}, we consider variations of the solutions $ U_y=U_z=0 $. More explicitly, we consider a variation of the form,
			\begin{equation}\label{eqn:c2variationsu}
				\begin{aligned}
					U_{y}(U_{y0},U_{z0},u_0; t) 	&= U_{y}^{(1)}(u_0,t) U_{y0} + U_{y0}\sum_{(n_1,n_2) \in N_1} U_{y}^{(n_1,n_2)}(u_0, t)  U_{y0}^{n_1} U_{z0}^{ q n_1 - m n_2} \\
					U_{z}(U_{y0},U_{z0},u_0; t) 	&= U_{z}^{(1)}(u_0,t) U_{z0} + U_{z0}\sum_{(n_1,n_2) \in N_2} U_{z}^{(n_1,n_2)}(u_0, t) U_{y0}^{n_1} U_{z0}^{ q n_1 - m n_2}, \\
					u(U_{y0},U_{z0},u_0; t)	&= u_0 + \sum_{(n_1,n_2) \in N_3} u^{(n_1,n_2)}(u_0, t) U_{y0}^{n_1} U_{z0}^{ q n_1 - m n_2}
				\end{aligned}
			\end{equation}
			with,
			\[  U_{y}^{(1)}(0) = U_{z}^{(1)}(0) = 1,\quad  U_{y}^{(n_1,n_2)}(u_0,0) = U_{z}^{(n_1,n_2)}(u_0,0) = u^{(n_1,n_2)}(u_0,0) = 0, \] so that at $ t=0 $, $ (U_{y},U_{z},u) = (U_{y0},U_{z0},u_0) $.
			
			The following proposition gives the structure of the variation coefficients.
			\begin{proposition}\label{prop:varStructureN}
				There exists functions $ \tilde{U}_y^{(n_1,n_2)},\tilde{U}_z^{(n_1,n_2)}, \tilde{u}_i^{(n_1,n_2)} \in \bar{\mathcal{R}}_{\alpha_1,\beta_1} $ such that,
				\begin{align*}
					U_y^{(n_1,n_2)}(u_0; t) &= \me^{-\alpha_1(u_0) t} \tilde{U}_y^{(n_1,n_2)}(t) \\
					U_z^{(n_1,n_2)}(u_0; t) &= \me^{-\beta_1(u_0) t} \tilde{U_z}^{(n_1,n_2)}(t) \\
					u^{(n_1,n_2)}(u_0; t) &= \tilde{u}^{(n_1,n_2)}(t)
				\end{align*}
				with $ \tilde{u}^{(n_1,n_2)} := \left(u_1^{(n_1,n_2)},\dots,\tilde{u}_k^{(n_1,n_2)}\right) $. Moreover:
				\begin{enumerate}[i)]
					\item Each $ \tilde{U}_y^{(n_1,n_2)},\tilde{U_z}^{(n_1,n_2)},\tilde{u}^{(n_1,n_2)} $ is polynomial in $ \alpha_{\tilde{n}_1,\tilde{n}_2},\beta_{\tilde{n}_1,\tilde{n}_2},\delta_{\tilde{n}_1,\tilde{n}_2} $ for $ \tilde{n}_1 + q \tilde{n}_2 - m \tilde{n}_1 \leq n_1 + q n _2 - m n_1 $ with zero constant term. .
					\item If $ \alpha_{n_1,n_2} $ (resp. $ \beta_{n_1,n_2},\delta_{n_1,n_2}^i $ ) vanish for $ n_1 + q n _2 - m n_1 \leq n\in \N $ then $ U_y^{(n_1,n_2)}(t)  $ (resp. $ U_z^{(n_1,n_2)}, U_{u_i}^{(n_1,n_2)} $) vanish for $ n_1 + q n _2 - m n_1 \leq n.$
				\end{enumerate}
			\end{proposition}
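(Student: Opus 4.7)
The proof closely mirrors that of Proposition~\ref{prop:varStructureNinN}. The two adjustments needed are (a) replacing the grading by $n_1 + n_2$ with the grading by the weight $w = n_1 + qn_2 - mn_1$ appearing in claim (i), and (b) accommodating the negative-index terms $\alpha_{-1, n'_2}$ in $N_1$ and $\beta_{n'_1, -1}$ in $N_2$, which exist because the coordinate planes $y = 0$ and $z = 0$ are no longer invariant in the resonant case (Remark~\ref{rmk:flattenxyz}). My first step is to substitute the ansatz~\eqref{eqn:c2variationsu} into the transformed system and match coefficients of the monomials $U_{y0}^{n_1+1} U_{z0}^{qn_2 - mn_1}$ (and the analogous monomials for the $U_z$ and $u$ components), producing the variational equations
\begin{align*}
\frac{d}{dt} U_y^{(n_1, n_2)} &= -\alpha_1(u_0)\, U_y^{(n_1, n_2)} + R_y^{(n_1, n_2)}, \\
\frac{d}{dt} U_z^{(n_1, n_2)} &= -\beta_1(u_0)\, U_z^{(n_1, n_2)} + R_z^{(n_1, n_2)}, \\
\frac{d}{dt} u^{(n_1, n_2)} &= R_u^{(n_1, n_2)},
\end{align*}
with initial conditions as in the ansatz.

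I would then induct on $w$. The base case reproduces $U_y^{(1)} = \me^{-\alpha_1(u_0) t}$ and $U_z^{(1)} = \me^{-\beta_1(u_0) t}$ exactly as in \eqref{eqn:Y1Z1sol}; when $m = 1$ the resonant coefficients $\alpha_{-1, 0}$ and $\beta_{1, 0}$ can introduce additional linear cross-couplings at the lowest level, but since the forcing is built from the known decoupled exponentials, variation of parameters together with the identity $\me^{\kappa t} = 1 + \kappa\,\Omega(\kappa, t)$ (for $\kappa = \pm\alpha_1, \pm\beta_1$) expresses the solutions in $\bar{\mathcal R}_{\alpha_1, \beta_1}$. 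For the inductive step at weight $w$, the hypothesis supplies the decompositions $U_y^{(\tilde n_1, \tilde n_2)} = \me^{-\alpha_1(u_0) t}\, \tilde U_y^{(\tilde n_1, \tilde n_2)}$ and its analogues for $U_z, u$, with $\tilde U_K \in \bar{\mathcal R}_{\alpha_1, \beta_1}$ for all $(\tilde n_1, \tilde n_2)$ of weight $< w$. Multiplying $R_y^{(n_1, n_2)}$ by $\me^{\alpha_1(u_0) t}$ and $R_z^{(n_1, n_2)}$ by $\me^{\beta_1(u_0) t}$ cancels the residual exponentials, and Lemma~\ref{lem:closedUnderIntDiff} then guarantees that the variation of parameters formula
\[
\tilde U_y^{(n_1, n_2)} = \int_0^t \me^{\alpha_1(u_0) \tau}\, R_y^{(n_1, n_2)}(\tau)\, d\tau,
\]
together with its $U_z$ and $u$ analogues, produces elements of $\bar{\mathcal R}_{\alpha_1, \beta_1}$. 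Properties (i) and (ii) follow from the polynomial dependence of each $R_K^{(n_1, n_2)}$ on the normal form coefficients of weight $\leq w$ and the fact that the forcings vanish identically when these coefficients do.

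The main obstacle is the combinatorial verification that every $R_K^{(n_1, n_2)}$ at weight $w$ is polynomial in variation coefficients of strictly smaller weight. A representative case is the source $\alpha_{-1, n'_2}(u_0)\, U_z^{q n'_2 + m}$ in $\dot U_y$, which after substituting the ansatz becomes a product of $q n'_2 + m$ factors of $U_z$ (each carrying total degree $\geq 1$) and contributes to the $(n_1, n_2) = (-1, n'_2 + k)$ variational equation, $k \geq 0$, at total degree $W = q n'_2 + q k + m$. Since the factor degrees must sum to $W$ and each is $\geq 1$, every factor carries total degree strictly less than $W$ (equivalently, weight strictly less than $w$) whenever $q n'_2 + m \geq 2$. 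The single residual case $m = 1,\ n'_2 = 0$ (one $U_z^{(1)}$ factor at the base level) is precisely the cross-coupling absorbed into the enlarged base case. Once this bookkeeping is in place, the remainder of the argument is a mechanical adaptation of the proof of Proposition~\ref{prop:varStructureNinN}.
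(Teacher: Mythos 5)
Your argument is correct and is essentially the proof the paper intends: the paper omits this proof entirely, stating only that it is ``almost identical'' to that of Proposition~\ref{prop:varStructureNinN} (induction, variation of parameters, and closure of $\bar{\mathcal{R}}_{\alpha_1,\beta_1}$ under integration via Lemma~\ref{lem:closedUnderIntDiff}), which is exactly what you carry out. Your extra bookkeeping --- inducting on the weight $n_1+qn_2-mn_1$ and checking that the negative-index sources $\alpha_{-1,n_2'}$, $\beta_{n_1',-1}$ only force equations at strictly lower weight except for the $m=1$ linear cross-coupling (which is harmless since the linear part is in Jordan form, so the first-order system still solves by back-substitution) --- is a genuine, and welcome, elaboration of the detail the paper glosses over rather than a different approach.
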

			The proof is omitted as it is almost identical to Proposition \ref{prop:varStructureNinN}, namely, using induction on $ n_1,n_2 $ to show that the integral solution to the variational equations gives the desired functions $  \tilde{U}_y^{(n_1,n_2)}(t), \tilde{U}_z^{(n_1,n_2)}(t),	\tilde{u}^{(n_1,n_2)}(t) $.
			
			Returning to the Dulac map, one again computes the time to go from $ \Sigma_y^\pm\cup\Sigma_z^\pm $ to $ \Sigma_x $ as simply $ t = -\ln x_0 $. We have the relation,
			\begin{equation}\label{eqn:y1z1u1N}
				\begin{aligned}
					y_1	&= U_y( x_0^{mp/q}y_0, x_0^{p/q} z_0, u_0,-\ln x_0), \\
					z_1	&= U_z( x_0^{mp/q}y_0, x_0^{p/q} z_0, u_0,-\ln x_0), \\
					u_1 &= u( x_0^{mp/q}y_0, x_0^{p/q} z_0, u_0,-\ln x_0). 
				\end{aligned}
			\end{equation} 
			
			The theorem on the asymptotic structure of the Dulac map follows.
			\begin{thm}\label{thm:asymptoticStructureOfDN}
				Suppose that $ \alpha(0)/\beta(0) \in \N $ and set $ \gamma_1 = \alpha - m \beta $. Then the Dulac map is asymptotic to the series,
				\begin{equation}
					\begin{aligned}
						y_1	&\sim x_0^{\beta(u_0)}\left( y_0 + \alpha_{-1,0}(u_0) z_0^m \omega(\gamma_1,x_0) + y_0 \sum_{(n_1,n_2)\in N_1} \bar{U}_y^{(n_1,n_2)}(u_0;x_0) (x_0^{mp} y_0^q)^{\frac{1}{q} n_1}(x_0^p z_0^q)^{n_2 - \frac{m}{q} n_1} \right) \\
						z_1	&\sim x_0^{\alpha(u_0)}\left( z_0 + z_0 \sum_{(n_1,n_2)\in N_2} \bar{U}_z^{(n_1,n_2)}(u_0;x_0)(x_0^{mp} y_0^q)^{\frac{1}{q} n_1}(x_0^p z_0^q)^{n_2 - \frac{m}{q} n_1} \right) \\
						u_1	&\sim u_0 +  \sum_{(n_1,n_2)\in N_3} \bar{u}^{(n_1,n_2)}(u_0;x_0)(x_0^{mp} y_0^q)^{\frac{1}{q} n_1}(x_0^p z_0^q)^{n_2 - \frac{m}{q} n_1} 
					\end{aligned}
				\end{equation}
				with $ y_0 = \pm 1 $, $ z_0 = \pm 1 $ when mapping from $ \Sigma_y^\pm,\Sigma_z^\pm $ respectively. Each coefficient $ K^{(n_1,n_2)} = \bar{U_y}^{(n_1,n_2)}, \bar{U_{z}}^{(n_1,n_2)} $ or $\bar{u}_i^{(n_1,n_2)}$, $ i=1,\dots,k, $ has the properties:
				\begin{enumerate}[i)]
					\item $ K^{(n_1,n_2)}\in \bar{\mathcal{R}}^\omega_{\alpha_1,\beta_1} $. \\
					\item If $ \alpha(u_0), \beta(u_0) $ are constant then $ K^{(n_1,n_2)} $ is polynomial in $ \ln x_0 $.
					\item $ K^{(n_1,n_2)} $ is polynomial in $ \alpha_{\tilde{n}_1,\tilde{n}_2},\beta_{\tilde{n}_1,\tilde{n}_2},\delta_{\tilde{n}_1,\tilde{n}_2} $ for $ \tilde{n}_1 + q \tilde{n}_2 - m \tilde{n}_1 \leq n_1 + q n _2 - m n_1  $ with zero constant term. 
					\item If $ \alpha_{n_1,n_2} $ (resp. $ \beta_{n_1,n_2},\delta_{n_1,n_2}^i $ ) vanish for $ n_1 + q n _2 - m n_1 \leq n\in \N $ then $ \bar{U}_y^{(n_1,n_2)}(t)  $ (resp. $ \bar{U}_z^{(n_1,n_2)}, \bar{U}_{u_i}^{(n_1,n_2)} $) vanish for $ n_1 + q n _2 - m n_1 \leq n.$
				\end{enumerate}
			\end{thm}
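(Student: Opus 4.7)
The plan is to mirror the proof of Theorem \ref{thm:asymStructureofDNinN}: combine Proposition \ref{prop:varStructureN} with the relation \eqref{eqn:y1z1u1N} and the substitutions $t = -\ln x_0$, $U_{y0} = x_0^{mp/q}y_0$, $U_{z0} = x_0^{p/q}z_0$. From Proposition \ref{prop:varStructureN} every variation coefficient factors as $U_y^{(n_1,n_2)}(u_0;t) = \me^{-\alpha_1(u_0)t}\tilde{U}_y^{(n_1,n_2)}(t)$, with the analogous factorisations for $U_z$ and $u$. Applying $t = -\ln x_0$ sends $\me^{-\alpha_1 t}$ to $x_0^{\alpha_1(u_0)}$ and, under the ring isomorphism $\bar{\mathcal{R}}_{\alpha_1,\beta_1} \to \bar{\mathcal{R}}^\omega_{\alpha_1,\beta_1}$ induced by the substitution, sends each $\tilde{U}^{(n_1,n_2)}$ to an element of $\bar{\mathcal{R}}^\omega_{\alpha_1,\beta_1}$. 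Combining the prefactors $x_0^{\alpha_1(u_0)}\cdot x_0^{mp/q} = x_0^{\alpha(u_0)}$ (and similarly for $\beta$), while bundling the monomial $U_{y0}^{n_1}U_{z0}^{qn_2-mn_1} = (x_0^{mp}y_0^q)^{n_1/q}(x_0^p z_0^q)^{n_2-mn_1/q}$, reproduces the shape of the series claimed in the statement for every index $(n_1,n_2) \neq (-1,0)$.

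The only genuinely new feature compared with Theorem \ref{thm:asymStructureofDNinN}, and the step I expect to be the main obstacle, is extracting the isolated term $\alpha_{-1,0}(u_0)\, z_0^m\, \omega(\gamma_1, x_0)$ contributed by the index $(n_1,n_2) = (-1,0) \in N_1$. Because $n_1 = -1$, this term is not attached to the $U_{y0}$ prefactor in \eqref{eqn:c2variationsu}: the factor $U_{y0}\cdot U_{y0}^{-1} = 1$ leaves a pure $U_{z0}^m$ contribution. The associated variational equation is sourced by $\alpha_{-1,0}(u_0)(U_z^{(1)})^m = \alpha_{-1,0}(u_0)\me^{-m\beta_1(u_0)t}$, so the integrating factor produces $U_y^{(-1,0)}(u_0;t) = \me^{-\alpha_1(u_0)t}\alpha_{-1,0}(u_0)\,\Omega(\alpha_1 - m\beta_1, t)$. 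Since $\gamma_1 = \alpha - m\beta = \alpha_1 - m\beta_1$, substituting $t = -\ln x_0$ and multiplying by $U_{z0}^m = x_0^{mp/q}z_0^m$ yields precisely the distinguished summand in the statement. No analogous extraction is required for $z_1$: the indices in $N_2$ that have a negative second component get absorbed into the leading $z_0$ factor, and the bookkeeping reduces to the generic case.

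Properties (i), (iii), and (iv) are then immediate from the corresponding clauses of Proposition \ref{prop:varStructureN}, transported to $\bar{\mathcal{R}}^\omega_{\alpha_1,\beta_1}$ by the $t \mapsto -\ln x_0$ substitution, together with the observation that the remainder sources $R_y^{(n_1,n_2)}, R_z^{(n_1,n_2)}, R_u^{(n_1,n_2)}$ are polynomial in the lower-order $\alpha_{\tilde{n}_1,\tilde{n}_2}, \beta_{\tilde{n}_1,\tilde{n}_2}, \delta_{\tilde{n}_1,\tilde{n}_2}$ with no constant term. For (ii), when $\alpha(u_0)$ and $\beta(u_0)$ are constant we have $\alpha_1 = \beta_1 = 0$, and Lemma \ref{lem:polynomialR} forces each $\tilde{U}^{(n_1,n_2)}(0,0;t)$ to be polynomial in $t$, hence polynomial in $\ln x_0$ after the substitution. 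The only real care required beyond the non-resonant case is in tracking which negative-index contributions escape the common $U_{y0}$ or $U_{z0}$ prefactor in the ansatz; once the $(-1,0)$ term is peeled off and computed by hand as above, the remaining sum parallels the proof of Theorem \ref{thm:asymStructureofDNinN} term by term.
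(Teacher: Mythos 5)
Your proposal follows essentially the same route as the paper: reduce to the argument of Theorem \ref{thm:asymStructureofDNinN} via Proposition \ref{prop:varStructureN}, equation \eqref{eqn:y1z1u1N} and the substitution $t=-\ln x_0$, then treat the index $(n_1,n_2)=(-1,0)$ separately by solving its variational equation to produce the $\alpha_{-1,0}(u_0)z_0^m\omega(\gamma_1,x_0)$ term. Your version is in fact slightly more careful than the paper's at that step (you source the equation with $(U_z^{(1)})^m=\me^{-m\beta_1 t}$ and retain the integrating-factor prefactor $\me^{-\alpha_1 t}$, which the paper's displayed formulas omit), so the proposal is correct and matches the intended proof.
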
 
			\begin{proof}
				The proof is almost identical to the proof of Theorem \ref{thm:asymStructureofDNinN}, namely, using equation \eqref{eqn:y1z1u1N}, Proposition \ref{prop:varStructureN} and substituting $ t = -\ln x_0 $ into the solution to the variational equations to get the asymptotic structure. The only difference is showing the additional $ \alpha_{-1,0} z_0^m \omega(\gamma_1,x_0) $ term in the $ y_1 $ component of the Dulac map $ D $. This comes from the variational coefficient $ U_y^{(-1,0)}(u_0,t) $. The coefficient must solve the variational equation
				\[ \frac{d}{dt} U_y^{(-1,0)}(u_0,t) = -\alpha_1(u_0) U_y^{(-1,0)}(u_0,t) + \alpha_{-1,0}(u_0) U_z^{(1)}(u_0,t).  \]
				By Proposition \ref{prop:varStructureN} it is known that $ U_z^{(1)}(u_0,t) = \me^{-\beta_1(u_0) t} $. It follows that,
				\[ U_y^{(-1,0)}(u_0,t) = \alpha_{-1,0}\Omega(\alpha_1 - m \beta_1, t) = \Omega(\alpha - m \beta,t). \]
				Finally, $ U_y^{(-1,0)} $ is the coefficient of $ U_{z0}^m $ in the $ U_y $ variation. Substituting $ U_{z0} = x_0^{p/q} z_0 $ as per equation \ref{eqn:y1z1u1N} yields the desired term in the asymptotic expansion of $ y_1 $.
			\end{proof}
		
			\begin{remark}
				Due its applicability to problems in celestial mechanics, especially \cite{duignanC83regularisationSimultaneous2020}, it is worth isolating the case when $ \alpha,\beta $ take constant values on $ \NHIM $. In the co-dimension 2 case, one obtains the asymptotic series by setting $ z_0 = 0, y_0 = 1 $ in Theorem \ref{thm:asymStructureofDNinN} and invoking property ii) to get,
				\begin{equation}
					\begin{aligned}
						y_1	&\sim x_0^{\alpha} \left( 1  + \sum_{n \geq 1} \hat{U}_y^{(n)}(u_0; \ln x_0) x_0^{n p} \right) \\
						u_1	&\sim u_0 + \sum_{n \geq 1} \hat{u}^{(n)}(u_0; \ln x_0) x_0^{n p},
					\end{aligned}
				\end{equation}
				for functions $ \hat{U}_y^{(n)}, \hat{u}^{(n)} $ polynomial in $ \ln x_0 $ and smooth in $ u_0 $.
			\end{remark}
	
				It is now evident that the asymptotic structure of the higher dimensional Dulac maps $ D $ share similar properties to the well known planar case. In the planar case the coefficients functions $ g_i(u,x_0) $ are known to be polynomial in the functions $ \omega(\alpha_1, x_0) $. This is mirrored in the present case with each of the coefficients $ K^{(n_1,n_2)} \in \mathcal{R}_{\alpha_1,\beta_1}^\omega $, the ring of polynomials in $ \omega(\pm\alpha_1,x_0),\omega(\pm\beta_1,x_0) $. The Mourtada property of the higher order asymptotic terms, first shown in the case $ \dot{u} = 0 $ in \cite{bonckaertAsymptoticPropertiesDulac2001}, should also be evident. 
        
		\section*{Acknowledgment}
	    The author would like to thank Holger Dullin for all the discussions and
	    constructive criticisms of which have made this paper possible. Thanks must
	    also be given to Robert Roussarie for the many comments that greatly
	    improved an earlier version of this manuscript, and to the reviewers for their careful reading. 
		
	\bibliography{Transitions}
	\bibliographystyle{alpha}

\end{document}